\setlist[enumerate,1]{label=(\arabic*), ref=(\arabic*)}
\setlist[enumerate,3]{label=(\roman*), ref=(\roman*)}
\theoremstyle{plain}
\newtheorem{theorem}{Theorem}[section]
\newtheorem{lemma}[theorem]{Lemma}
\newtheorem{proposition}[theorem]{Proposition}
\newtheorem{conjecture}[theorem]{Conjecture}
\newtheorem{question}[theorem]{Question}
\newtheorem{claim}[theorem]{Claim}
\newenvironment{claimproof}[1][Proof]{\par
	\pushQED{\qed}%
	
	\normalfont \topsep6\p@\@plus6\p@\relax
	\trivlist
	\item[\hskip\labelsep
	\textit{#1}\@addpunct{.}~]\ignorespaces
}{%
	\popQED\endtrivlist\@endpefalse
}
\newlist{Cases}{enumerate}{3}
\setlist[Cases]{parsep=0pt plus 1pt}
\setlist[Cases,1]{wide=0pt, listparindent=\parindent,
    label = \textbf{Case~\arabic*:}, ref = \arabic*}
\setlist[Cases,2]{wide=\parindent, listparindent=\parindent,
    label = \textbf{Case~\arabic{Casesi}-\arabic{Casesii}:}}
\crefname{Casesi}{case}{cases}
\newcounter{case}
\crefname{case}{case}{cases}
\theoremstyle{definition}
\newtheorem{definition}[theorem]{Definition}
\def\A{\mathcal{A}}
\def\B{\mathcal{B}}
\newcommand{\calA}{\mathcal{A}}
\newcommand{\calB}{\mathcal{B}}
\newcommand{\calH}{\mathcal{H}}
\newcommand{\calP}{\mathcal{P}}
\newcommand{\ord}{\mathrm{Ord}}
\newcommand{\kernel}{\mathrm{Ker}}
\newcommand*{\ceilfrac}[2]{\mathopen{}\left\lceil\frac{#1}{#2}\right\rceil\mathclose{}}
\newcommand*{\floorfrac}[2]{\mathopen{}\left\lfloor\frac{#1}{#2}\right\rfloor\mathclose{}}
\newcommand*{\abs}[1]{\lvert #1\rvert}
\NewDocumentCommand{\xsideset}{mmme{_^}}{%
  \mathop{%
    % half width of #3
    \settowidth{\dimen0}{$\m@th\displaystyle#3$}%
    \dimen0=.5\dimen0
    % half width of #3 with subscripts or superscripts,
    % half width of #3 removed
    \settowidth{\dimen2}{$%
      \m@th\displaystyle#3%
      \IfValueT{#4}{_{#4}}%
      \IfValueT{#5}{^{#5}}%
    $}%
    \dimen2=.5\dimen2
    \advance\dimen2 -\dimen0
    % prescripts
    \sbox6{\scriptspace\z@$\displaystyle{\vphantom{#3}}#1$}
    % postscripts
    \sbox8{\scriptspace\z@$\displaystyle{\vphantom{#3}}#2$}
    % typeset the thing
    \ifdim\wd6>\dimen2 \kern\dimexpr\wd6-\dimen2\relax\fi
    {%
     \mathop{\llap{\copy6}{\displaystyle#3}\rlap{\copy8}}\limits
     \IfValueT{#4}{_{#4}}%
     \IfValueT{#5}{^{#5}}%
    }%
    \ifdim\wd8>\dimen2 \kern\dimexpr\wd8-\dimen2\relax\fi
  }%
}
\let\originalleft\left
\let\originalright\right
\renewcommand{\left}{\mathopen{}\mathclose\bgroup\originalleft}
\renewcommand{\right}{\aftergroup\egroup\originalright}
\title{On the order of intersecting hypergraphs}
\author{Stijn Cambie\thanks{Department of Computer Science, KU Leuven Campus Kulak-Kortrijk, 8500 Kortrijk, Belgium. Supported by the Research Foundation Flanders (FWO) (grant number 1225224N). E-mail: {\tt stijn.cambie@hotmail.com}} \and Jaehoon Kim\thanks{Department of Mathematical Sciences, KAIST, South Korea.
         Supported by the National Research Foundation of Korea (NRF) grant funded by the Korea government(MSIT) No. RS-2023-00210430. Email: {\ttfamily $\{$jaehoon.kim, hyunwoo.lee$\}$@kaist.ac.kr}}
\and Hyunwoo Lee\footnotemark[2]~\thanks{Extremal Combinatorics and Probability Group (ECOPRO), Institute for Basic Science (IBS), Daejeon, South Korea. Supported by the Institute for Basic Science (IBS-R029-C4). Email: \texttt{hongliu@ibs.re.kr}. }
\and 
Hong Liu\footnotemark[3]
\and Tuan Tran\thanks{School of Mathematical Sciences, University of Science and Technology of China, Hefei, Anhui, China. Supported by the National Key Research and Development Program of China 2023YFA1010201 and Excellent Young Talents Program (Overseas) of the National Natural Science Foundation of China. Email: \texttt{trantuan@ustc.edu.cn}.}
}
\begin{document}
\maketitle

\begin{abstract}
    Determining the maximum number of edges in an intersecting hypergraph on a fixed ground set under additional constraints is one of the central topics in extremal combinatorics. In contrast, there are few results on analogous problems concerning the maximum order of such hypergraphs.
    In this paper, we systematically study these vertex analogues.
\end{abstract}

%---------------------------------------------------------------

\section{Introduction}\label{sec:intro}

A central topic in extremal set theory is the determination of possible sizes of hypergraphs that satisfy certain structural constraints. One widely studied constraint is the intersecting property, where a hypergraph is called {\em intersecting} if every pair of distinct edges shares at least one common vertex.

A famous theorem by Erd\H{o}s, Ko and Rado~\cite{erdos-ko-rado} states that any intersecting $k$-uniform hypergraph ($k$-graph) on $n \geq 2k$ vertices has at most $\binom{n-1}{k-1}$ edges, which is best possible. Inspired by the Erd\H{o}s-Ko-Rado theorem, there have been extensive studies of intersecting uniform hypergraphs. 
Notable results include the complete intersection theorem due to Ahlswede and Khachatrian~\cite{Ahlswede-Khachatrian-I,Ahlswede-Khachatrian-II}, and bounds on the transversal number of intersecting $k$-graphs 
by Erd\H{o}s and Lov\'{a}sz~\cite{Erdos-Lovasz}, to name a few.

The \emph{order} of a hypergraph $\calH$ is defined as the number of non-isolated vertices in $\calH$, denoted by $\ord(\calH)$. 
We note that the term order is also used when referring to projective planes, where it has a different meaning. In this paper, the intended usage will be clear from context. In the study of intersecting hypergraphs, isolated vertices are often irrelevant, making the order a more natural parameter than the total number of vertices. While numerous results have been established concerning the size of intersecting hypergraphs, relatively few have addressed their order. The aim of this paper is to develop order-analogues of several results on uniform intersecting hypergraphs. 
In particular, we focus on variants of several natural theorems.

The intersecting condition alone is not sufficient to bound the size of hypergraphs. Indeed, this can be easily seen by considering an arbitrary collection of sets that all contain a common element. However, such a trivial construction results in a vertex with high degree. This naturally leads to the question: what is the maximum possible size of an intersecting hypergraph when the maximum degree is bounded? The following theorem of F\"{u}redi answers this question.

\begin{theorem}[F\"{u}redi~\cite{Furedi}]\label{thm:furedi}
    Let $\calH$ be an intersecting $k$-graph with maximum degree $\Delta$. Then, %we have 
    \[
    |E(\calH)| \leq \Big(k - 1 + \frac{1}{k} \Big) \Delta,
    \]
    and this bound is tight if there exists a projective plane of order $k$.
\end{theorem}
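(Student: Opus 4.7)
The plan is to attack Füredi's bound through a star-and-residual decomposition combined with a double counting against the link of a maximum-degree vertex. Fix $v^*\in V(\calH)$ achieving the maximum degree $\Delta$, and split $\calH=\calS\sqcup\calR$, where $\calS=\{E\in\calH:v^*\in E\}$ is the star at $v^*$ (so $|\calS|=\Delta$) and $\calR=\calH\setminus\calS$. The target reduces to $|\calR|\le(k-2+1/k)\Delta$. Since $v^*\notin R$ for every $R\in\calR$ and $\calH$ is intersecting, each $R\in\calR$ must meet every $E\in\calS$ in a point distinct from $v^*$; equivalently, each $R\in\calR$ is a $k$-element transversal of the $(k-1)$-uniform link $\calS^\circ:=\{E\setminus\{v^*\}:E\in\calS\}$.

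The next step is a double count of triples $(R,E,u)$ with $R\in\calR$, $E\in\calS$, and $u\in R\cap(E\setminus\{v^*\})$. By the transversal property each pair $(R,E)$ contributes at least one triple, giving the lower bound $\Delta|\calR|$. Collecting by $u$ first, the number of triples equals $\sum_{u\ne v^*}d_\calR(u)\,d_\calS(u)$. I would then exploit the constraints $d_\calR(u)+d_\calS(u)\le\Delta$, $\sum_{u\ne v^*}d_\calS(u)=(k-1)\Delta$, and $\sum_{u\ne v^*}d_\calR(u)=k|\calR|$, and optimize via Cauchy--Schwarz or a Lagrangian analysis to obtain an upper bound.

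The main obstacle, I expect, is squeezing this bound from the naive $|\calH|\le k\Delta$ (which the raw convexity argument yields) down to the tight $(k-1+1/k)\Delta$. To close the gap, one must leverage two further facts: (a) $\calR$ is itself intersecting, so its edges over-determine their transversal role against $\calS^\circ$; and (b) the extremal profile forces $\calS^\circ$ to be nearly regular, so that the distribution of $d_\calS(u)$ concentrates tightly. A natural strategy is to argue that at the extremum, $\calS^\circ\cup\calR$ is forced to approximate the point-line incidence structure of a projective plane, and to quantify how any deviation strictly reduces $|\calR|$. I anticipate that this ``design-rigidity'' step --- translating from an aggregate convexity inequality to the precise constant $k-1+1/k$ --- is the genuinely hard part, likely requiring either an inductive refinement on the structure of $\calS^\circ$ or a sharper LP-duality bound on fractional transversals.

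Finally, for the tightness claim, a projective plane with lines of size $k$ and $k$ lines per point provides a $k$-uniform intersecting hypergraph with $\Delta=k$ and $k^2-k+1=(k-1+\tfrac{1}{k})\cdot k$ edges, which matches the upper bound and confirms that the constant cannot be improved whenever such a plane exists.
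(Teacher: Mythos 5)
A preliminary remark: the paper does not prove Theorem~\ref{thm:furedi} at all; it is quoted from F\"uredi's paper as background, so there is no in-paper proof to compare against. Your proposal must therefore be judged on its own terms, and as written it is not a proof: the decisive step is missing, and you say so yourself.

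Concretely, the star/residual decomposition at a maximum-degree vertex $v^*$ and the double count of triples $(R,E,u)$ are set up correctly, but the information they retain is provably insufficient for the stated constant. From $\Delta\lvert\calR\rvert\le\sum_{u}d_{\calR}(u)\,d_{\calS}(u)$ together with $d_{\calR}(u)\le\Delta-d_{\calS}(u)$ and $\sum_{u}d_{\calS}(u)=(k-1)\Delta$, the clean consequence is $\lvert\calR\rvert\le(k-1)(\Delta-1)$, hence $\lvert E(\calH)\rvert\le k\Delta-k+1$; this coincides with $\left(k-1+\frac1k\right)\Delta$ only at $\Delta=k$ and is strictly weaker for every $\Delta>k$, since $k\Delta-k+1-\left(k-1+\frac1k\right)\Delta=\frac{(k-1)(\Delta-k)}{k}$. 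Pushing harder on the same constraints is delicate because the remaining program is bilinear (the constraint couples $\sum_u d_{\calR}(u)$ with $\sum_u d_{\calR}(u)d_{\calS}(u)$), and in any case it discards the intersecting structure inside $\calR$ and the global constraint coming from edges of $\calR$ meeting one another. The ``design-rigidity'' step you describe is exactly the content of F\"uredi's theorem that every intersecting $k$-graph has fractional cover number $\tau^*(\calH)=\nu^*(\calH)\le k-1+\frac1k$; once that is available, the degree bound follows in one line, since for any fractional cover $w$ one has $\lvert E(\calH)\rvert\le\sum_{e\in E(\calH)}\sum_{v\in e}w(v)=\sum_v w(v)\,d(v)\le\Delta\,\tau^*(\calH)$. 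That fractional-matching bound is the entire difficulty, and your proposal neither proves it nor reduces to it. Your tightness verification is essentially right: a projective plane with $k$ points per line (i.e.\ of order $k-1$; the statement's ``order $k$'' is off by one in this respect) gives $\Delta=k$ and $k^2-k+1=\left(k-1+\frac1k\right)k$ edges, and replacing each line by $\Delta/k$ parallel copies extends this to larger $\Delta$.
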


A hypergraph $\calH$ is called $\lambda$-intersecting if every pair of distinct edges share exactly $\lambda$ vertices. A trivial way to construct a $\lambda$-intersecting hypergraph of arbitrary size is to take edges whose pairwise intersections are all the same set of $\lambda$ vertices. Such a hypergraph is called a sunflower. Formally, $\calH$ is a sunflower if $e_1 \cap e_2 = \bigcap_{e \in E(\calH)} e$ for all distinct edges $e_1, e_2 \in \calH$.  The $\lambda$-intersecting condition is quite restrictive: even without additional assumptions, it places an upper bound on the size of the hypergraph, as long as it is not a sunflower. The following theorem, due to Deza, provides a tight upper bound.

\begin{theorem}[Deza~\cite{Deza}]\label{thm:deza}
    Let $\calH$ be a $\lambda$-intersecting $k$-graph that is not a sunflower. Then, %we have
    \[
    |E(\calH)| \leq \max \{\lambda (\lambda + 1) + 1, (k - \lambda)(k - \lambda + 1 ) + 1\}.
    \]
\end{theorem}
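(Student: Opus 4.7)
My plan is to combine a sunflower bound with an induction on $\lambda$ and a counting argument against a fixed pair of edges. The key tool is the observation that any sunflower subfamily of $\calH$ with core of size $\lambda$ has at most $k - \lambda + 1$ edges. Indeed, if $S_1, \dots, S_p \in E(\calH)$ pairwise intersect in a common set $T$ of size $\lambda$, and $C \in E(\calH) \setminus \{S_1, \dots, S_p\}$ is another edge with $c \defeq |C \cap T|$, then $|C \cap S_i| = \lambda$ together with the pairwise disjointness of the petals $S_i \setminus T$ gives $k = |C| \ge c + p(\lambda - c)$, which rearranges to $p \le k - \lambda + 1$ unless $c = \lambda$, i.e.\ $T \subseteq C$; if the latter holds for every such $C$, then $\calH$ is itself a sunflower with core $T$, contradicting the hypothesis.

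Next, I would reduce to the case of empty common intersection by induction on $\lambda$. Let $K \defeq \bigcap_{e \in E(\calH)} e$ with $\mu \defeq |K|$. If $\mu = \lambda$, every pairwise intersection equals $K$ and $\calH$ is a sunflower, a contradiction. If $0 < \mu < \lambda$, apply induction to $\calH' \defeq \{e \setminus K : e \in E(\calH)\}$, which is $(k - \mu)$-uniform, $(\lambda - \mu)$-intersecting and not a sunflower; since $(\lambda - \mu)(\lambda - \mu + 1) + 1 \le \lambda(\lambda + 1) + 1$ and $k - \lambda$ is unchanged, the bound transfers. So we may assume $K = \emptyset$.

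Now pick any two edges $A, B \in E(\calH)$, set $T \defeq A \cap B$, and partition $E(\calH)$ into classes $\calH_j \defeq \{C \in E(\calH) : |C \cap T| = j\}$ for $0 \le j \le \lambda$. The class $\calH_\lambda$ consists of edges containing $T$; by the $\lambda$-intersection property they pairwise meet exactly in $T$ and form a sunflower, so $|\calH_\lambda| \le k - \lambda + 1$ by the key lemma. For $j < \lambda$, each $C \in \calH_j$ is determined on $A \cup B$ by the triple $(C \cap T,\, C \cap (A \setminus T),\, C \cap (B \setminus T))$, whose components have sizes $(j, \lambda - j, \lambda - j)$; two distinct edges sharing this triple would intersect in at least $2\lambda - j > \lambda$ elements on $A \cup B$ alone, contradicting the $\lambda$-intersection property. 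Hence each triple occurs at most once, giving $|\calH_j| \le \binom{\lambda}{j}\binom{k-\lambda}{\lambda-j}^2$.

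The main obstacle is that summing these type-counts matches $k(k-1)+1$ cleanly when $\lambda = 1$ but is loose compared to $\max(\lambda(\lambda+1)+1,\,(k-\lambda)(k-\lambda+1)+1)$ for larger $\lambda$. To close the gap I expect to have to exploit the $\lambda$-intersection constraints \emph{within} each class $\calH_j$, not merely against $A$ and $B$: for each fixed $t \in T$, the edges of $\calH_j$ containing $t$ form, after deletion of $t$, a $(k-1)$-uniform $(\lambda - 1)$-intersecting family that is either a sunflower of controlled size or, by the inductive hypothesis, bounded by the desired max expression at parameters $(k-1, \lambda - 1)$. Orchestrating this nested induction so that the binomial overcount collapses to the clean quadratic bound is the principal technical hurdle.
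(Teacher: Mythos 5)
The paper does not prove \Cref{thm:deza} at all---it is quoted as a known result of Deza---so there is no in-paper argument to compare yours against; your attempt has to stand on its own, and as written it does not. Your preliminary steps are fine: the bound $p\le k-\lambda+1$ on a sunflower subfamily with core $T$ of size $\lambda$ is correct (a witness $C$ with $c=|C\cap T|\le\lambda-1$ gives $k\ge c+p(\lambda-c)$, and $(k-c)/(\lambda-c)$ is maximised at $c=\lambda-1$), and the reduction to $\bigcap_{e}e=\varnothing$ by stripping the common part is sound. The problem is the main count. For $\lambda=1$ the class decomposition does give $|\calH_0|+|\calH_1|\le(k-1)^2+k=k(k-1)+1$, but for $\lambda\ge2$ the bound $\sum_{j<\lambda}\binom{\lambda}{j}\binom{k-\lambda}{\lambda-j}^2+(k-\lambda+1)$ is of order $\binom{k-\lambda}{\lambda}^2$, i.e.\ degree $2\lambda$ in $k$, against a target that is quadratic. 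You acknowledge this yourself and defer it to a ``nested induction'', but the plan you sketch does not close the gap either: bounding each degree $d(t)$ for $t$ in a fixed edge $A$ by the inductive bound at parameters $(k-1,\lambda-1)$ and double counting via $\lambda(m-1)=\sum_{t\in A}(d(t)-1)$ yields only $m\le 1+\frac{k}{\lambda}(k-\lambda)(k-\lambda+1)$, which is cubic in $k$.

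So the core of Deza's theorem is missing: why the answer is quadratic, and why the term $\lambda(\lambda+1)+1$ appears at all (it dominates when $\lambda$ is large relative to $k-\lambda$, and nothing in your argument produces it). The known proofs proceed quite differently: one shows via a quadratic degree inequality (essentially \Cref{lem:degree-condition} of this paper) that once $m$ exceeds the claimed bound, every element has degree either very small or very close to $m$; that at least $\lambda$ elements must be of the latter, heavy, kind; and that these $\lambda$ heavy elements then lie in every edge and force $\calH$ to be a sunflower. That dichotomy-and-core argument---which is also the engine behind the paper's own \Cref{lem:degree-separation,lem:heavy-edge}---is the missing idea; refining the type-count against two fixed edges will not get you there.
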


Since each edge contains $k$ vertices, any upper bound on the size of a hypergraph naturally yields an upper bound on its order, namely $\ord(\mathcal{H}) \leq k |E(\calH)|$. However, this bound is generally not tight, as intersecting hypergraphs often have vertices with higher degrees, leading to overlaps that reduce the overall order. In this paper, we establish asymptotically tight bounds on the order of such hypergraphs, presenting order-analogues of \Cref{thm:furedi} and \Cref{thm:deza} as follows.

\begin{theorem}\label{thm:furedi-order}
    Let $\calH$ be an intersecting $k$-graph with maximum degree at most $\Delta$. Then 
    \[
    \ord(\calH)\leq \frac{1}{4}k^2\Delta + \frac{3}{2}\binom{2k-2}{k-1}.
    \]
    Moreover, this bound is asymptotically tight for large $\Delta$.
\end{theorem}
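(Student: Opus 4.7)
The plan is to combine a double count of intersecting edge pairs with the maximum-degree constraint, then optimise in the number of edges; no invocation of \Cref{thm:furedi} is needed. Let $m=|E(\calH)|$, $n=\ord(\calH)$, and write $d(v)$ for the degree of a vertex $v$.

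First, since every two distinct edges of $\calH$ share at least one vertex, counting pairs of edges by their common vertices gives
\[
\sum_{v} \binom{d(v)}{2} \;=\; \sum_{\{e,e'\}\subseteq E(\calH)} |e\cap e'| \;\geq\; \binom{m}{2}.
\]
Next, every non-isolated vertex $v$ satisfies $d(v)(d(v)-1)\leq \Delta(d(v)-1)$ because $d(v)\leq\Delta$ (isolated vertices contribute $0$ on both sides), so
\[
m(m-1) \;\leq\; \sum_{v} d(v)(d(v)-1) \;\leq\; \Delta\sum_{v:\,d(v)\geq 1}(d(v)-1) \;=\; \Delta(km - n).
\]
Rearranging gives $n \leq km - m(m-1)/\Delta$, a concave quadratic in $m$ maximised at $m^{*}=(k\Delta+1)/2$, with value
\[
\frac{(k\Delta+1)^2}{4\Delta} \;=\; \frac{k^2\Delta}{4} + \frac{k}{2} + \frac{1}{4\Delta}.
\]
The elementary bound $\frac{3}{2}\binom{2k-2}{k-1} \geq \frac{k}{2}+\frac{1}{4\Delta}$, which is immediate for all $k,\Delta\geq 1$, then yields the stated inequality.

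The genuine difficulty is not in this short upper-bound argument, which is essentially a one-line double count, but in confirming that the leading constant $1/4$ is asymptotically correct. For tightness I would take $s=\lfloor k/2\rfloor$, start from an intersecting $s$-graph on $s^2-s+1$ vertices in which every vertex has degree $s$ (for instance a projective plane of order $s-1$ when one exists, or any Füredi-extremal configuration), extend each edge by $k-s$ fresh private vertices, and then blow up each edge into $t$ parallel $k$-edges that agree on the $s$-core but have pairwise disjoint private parts. The resulting hypergraph is intersecting, has maximum degree $st$, and satisfies $\ord(\calH) = (s^2-s+1)\bigl(1+(k-s)t\bigr) = (1+o(1))\,k^2\Delta/4$ as $\Delta\to\infty$.
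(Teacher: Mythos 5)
Your upper-bound argument is correct, and it takes a genuinely different and more elementary route than the paper. The paper first passes to a minimum kernel $A$ (invoking Majumder's bound $\kernel(\calH)\le\frac32\binom{2k-2}{k-1}$), bounds $|E(\calH)|\le|e|\Delta$ via a smallest trace $e=f\cap A$, and then counts the at most $k-|e|$ private vertices per edge, arriving at $(k-|e|)|e|\Delta$ plus the kernel size. You instead double-count intersecting pairs, $\binom{m}{2}\le\sum_v\binom{d(v)}{2}$, feed in $d(v)\le\Delta$ to get $m(m-1)\le\Delta(km-\ord(\calH))$, and optimise the resulting concave quadratic in $m$; every step checks out. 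What your approach buys is significant: it yields $\ord(\calH)\le\frac14k^2\Delta+\frac{k}{2}+\frac{1}{4\Delta}$, i.e.\ an additive term linear in $k$ rather than the exponential $\frac32\binom{2k-2}{k-1}$ --- precisely the improvement the paper's concluding remarks ask for --- and it needs no kernel machinery at all. What it gives up is the structural information the kernel argument provides (which the paper reuses elsewhere). The only soft spot is the tightness discussion: your blow-up of a degree-regular intersecting $s$-graph with $s=\lfloor k/2\rfloor$ is the same idea as the paper's Proposition~\ref{prop:furedi-order}, but a projective plane of order $\lfloor k/2\rfloor-1$ need not exist for every $k$; to cover all large $k$ you should, as the paper does, pick a prime $q$ with $\frac{(1-\ve)k}{2}\le q<\frac{k}{2}$ via the Prime Number Theorem (or Bertrand's postulate) and accept an $\ve$ loss in the leading constant. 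With that standard fix, the tightness claim is fine.
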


\begin{theorem}\label{thm:deza-order}
    Let $k,\lambda\in\mathbb{N}$ with $\lambda < k$ and $\calH$ be a $\lambda$-intersecting $k$-graph that is not a sunflower. Then 
    \[
    \ord(\calH) \leq \frac{4}{27}(k-\lambda)^3 + O\big(\lambda(k-\lambda)^{2} + (k-\lambda)^{5/2}\big).
    \]
    Moreover, this bound is asymptotically tight when $\lambda=o(k)$.
\end{theorem}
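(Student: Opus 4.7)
The plan is to split the non-isolated vertex set of $\calH$ into the set $V^\ast \coloneqq V_{\geq 2}(\calH)$ of non-private vertices and the set of degree-$1$ (``private'') vertices, so that
\[
\ord(\calH) = n^\ast + n_1, \qquad n^\ast = |V^\ast|, \quad n_1 = mk - D,
\]
where $m \coloneqq |E(\calH)|$, $r_e \coloneqq |e \cap V^\ast|$, and $D \coloneqq \sum_{v \in V^\ast} d_v = \sum_e r_e$. The trace family $\calH^\ast \coloneqq \{e \cap V^\ast : e \in \calH\}$ inherits the $\lambda$-intersecting and non-sunflower properties, with possibly non-uniform edges of maximum size $r \coloneqq \max_e r_e$. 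The strategy is to pin down the pair $(m,r)$ via non-uniform Deza-type inequalities, plug back into $\ord(\calH) = mk - D + n^\ast$, and optimise over $q \coloneqq r - \lambda \in [1,\,k-\lambda]$.

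First I would observe that $r_e \geq \lambda+1$ for every edge: if $r_e = \lambda$, then $e \cap V^\ast$ would be a common $\lambda$-subset of all edges, producing a sunflower. Next comes the technical core. Let $U \subseteq V^\ast$ be the set of vertices of $\calH$ with degree $m$; any such vertex lies in every pairwise intersection, so the non-sunflower assumption forces $|U| \leq \lambda - 1$. Stripping $U$ from each edge yields a $(\lambda - |U|)$-intersecting family $\calH'$ with no vertex of full degree, maximum edge size $r - |U|$, and edge count $m$. A pencil-plus-transversal argument (immediate when $\lambda - |U| = 1$ and extendable by iteration) shows that every vertex of $\calH'$ has degree at most $r - |U|$; together with the degree identity $\sum_{v \in V^\ast} d_v^2 = D + \lambda m(m-1)$ this yields the pair of (tight) inequalities
\[
m \leq q(q+1) + 1 \qquad \text{and} \qquad D \geq \frac{m\bigl(m + (\lambda - 1) q - 1\bigr)}{q},
\]
both attained simultaneously at the extended-projective-plane example below.

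Combining Fisher's inequality $n^\ast \geq m$ with a matching $n^\ast \leq m + O(\lambda)$ from Cauchy--Schwarz on the same degree data and substituting into $\ord(\calH) = mk - D + n^\ast$ gives, at the Deza-extremal $m = q^2+q+1$,
\[
\ord(\calH) \leq (q^2 + q + 1)(k - q - \lambda + 1) + O(\lambda).
\]
Elementary calculus on the main term $q^2(k - q - \lambda)$ produces the unique optimum $q^\ast = 2(k - \lambda)/3 + O(1)$ with value $(4/27)(k - \lambda)^3$; a monotonicity check confirms that sub-Deza values $m < q^2 + q + 1$ only decrease the bound. Sharpness is witnessed by an ``extended projective plane'': take a projective plane of order $q$ within $O((k - \lambda)^{1/2})$ of $2(k - \lambda)/3$ (available by density of prime powers), adjoin $\lambda - 1$ universal vertices to every line, and pad each resulting set to size $k$ with fresh private vertices.

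The main obstacle is the non-uniform Deza-type step for $\lambda \geq 2$ when $|U| < \lambda - 1$: the pencil-plus-transversal argument does not directly control degrees in a $(\lambda - |U|)$-intersecting family with $\lambda - |U| \geq 2$, so the peel-off must be iterated or supplemented by a Deza--Frankl inequality together with extra degree-sum bookkeeping; this accumulated slack generates the $O(\lambda(k - \lambda)^2)$ piece of the error. The remaining $O((k - \lambda)^{5/2})$ absorbs the prime-power gap in the construction and the discreteness of optimising $q$ over an integer lattice around $2(k - \lambda)/3$.
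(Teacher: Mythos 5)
Your proposal has a genuine gap at its technical core, and you name it yourself: the entire quantitative chain (the Deza-type bound $m\le q(q+1)+1$ and the lower bound on $D$) rests on showing that, after deleting the set $U$ of full-degree vertices, every remaining vertex has degree at most roughly $r-|U|$. The pencil-plus-transversal argument proves this only for $1$-intersecting families; for $\lambda-|U|\ge 2$ the residual family has no full-degree vertex to peel, yet it can still contain vertices of degree very close to $m$ (a vertex lying in all but one edge, say), so ``iterating the peel-off'' does not terminate in a degree bound, and ``supplemented by a Deza--Frankl inequality together with extra degree-sum bookkeeping'' is a hope, not an argument. This is precisely where the paper does real work: it invokes the McCarthy--Vanstone inequality (\Cref{lem:degree-condition}) to get a degree dichotomy --- every vertex has degree at most $1.1\mu$ or at least $m-1.1\mu$ --- and then runs a case analysis on the number of \emph{heavy} vertices (\Cref{lem:heavy-edge}, \Cref{clm:many-heavy-vtx}), reducing the hardest case (exactly $\lambda-1$ heavy vertices whose link is not a sunflower) to the separately proved $1$-intersecting theorem (\Cref{thm:1-intersecting}). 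Without some substitute for this dichotomy your main inequalities are unproven.

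Two further steps are false or insufficient as stated. First, the claimed bound $n^\ast\le m+O(\lambda)$ is contradicted by the paper's own constructions: the $1$-intersecting graphs $\calH_a$ of \Cref{prop:order-tight} with $a\ge 1$ have $\Theta(k^3)$ vertices of degree at least $2$ but only $\Theta(k^2)$ edges, so no Cauchy--Schwarz argument can deliver that inequality. (Your final bound could be rescued here by noting $D-n^\ast\ge n^\ast$ instead of bounding $n^\ast$ and $D$ separately, but the bookkeeping as written is wrong.) Second, Deza's theorem gives $m\le\max\{\lambda(\lambda+1)+1,\,q(q+1)+1\}$; you silently discard the first branch, which dominates when $r-\lambda<\lambda$. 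In the regime $\lambda\gg\mu=k-\lambda$ this branch allows $m$ up to about $\lambda^2$, and $\ord(\calH)\le \mu m+\lambda$ then only yields $O(\lambda^2\mu)$, which exceeds the theorem's error term $O(\lambda\mu^2)$. The paper handles exactly this regime via the heavy-vertex count, obtaining $m\le 3\lambda\mu$ rather than $m\le\lambda^2+\lambda+1$. Your construction for sharpness (projective plane of order near $2(k-\lambda)/3$ plus $\lambda-1$ universal vertices) does match the paper's \Cref{prop:first-construction}.
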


Next, we consider the order of \emph{set pair systems}. 
A pair of edge-labelled hypergraphs $(\calA, \calB)$ is called a \emph{set pair system (SPS)} if $E(\calA) = \{e_1, \dots, e_m\}$ and $E(\calB) = \{f_1, \dots, f_m\}$ satisfy $\abs{e_i \cap f_j} \neq \varnothing$ if and only if $i \neq j$. The celebrated theorem of Bollob\'{a}s~\cite{Bollobas} on
SPS gives a sharp bound on the size $m$, which has found numerous applications in extremal set theory. 
A SPS $(\calA, \calB)$ is called \emph{$(a, b)$-bounded} if every edge of $\calA$ and $\calB$ has size at most $a$ and $b$, respectively. %and it is 
It is called \emph{$1$-cross-intersecting} if every intersecting pair in $E(\calA) \times E(\calB)$
shares exactly one vertex.
The following theorem is recently obtained by F\"{u}redi, Gy\'{a}rf\'{a}s, and Kir\'{a}ly~\cite{FGK23}.

\begin{theorem}[{\cite{FGK23}}]\label{thm:2nsps}
    For all $n \geq 4$, every $(2, n)$-bounded $1$-cross-intersecting $SPS$ $(\calA, \calB)$ satisfies
    \[
    |E(\calA)| = |E(\calB)| \leq \left(\floorfrac{n}{2} + 1 \right)\left(\Bigl\lceil \frac{n}{2} \Bigr\rceil + 1 \right),
    \]
    and this is the best possible.
\end{theorem}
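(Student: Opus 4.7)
\begin{proofsketch}[Proof plan]
The plan is to treat $\calA$ as a graph (each edge has size at most~$2$; singleton edges will be absorbed by a short reduction) and to extract structural constraints on $\calA$ from the intersection conditions before optimising. The starting observation is that $f_j \cap e_j = \emptyset$ together with $|f_j \cap e_i|=1$ for $i\ne j$ says precisely that $f_j$ is an independent vertex cover of $\calA - e_j$ avoiding both endpoints of $e_j$. Existence of such a set forces $\calA - e_j$ to be bipartite for every edge $e_j$, so every odd cycle of $\calA$ must contain every edge. Hence either $\calA$ equals a single odd cycle $C_\ell$ (in which case $f_j$ is the small bipartition class of $C_\ell - e_j$, of size $(\ell-1)/2$, giving $m=\ell\le 2n+1$, within the target for $n\ge 4$), or $\calA$ is bipartite.

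In the bipartite case, I next show every edge $e=xy$ is a bridge: otherwise an alternative $x$--$y$ path in $\calA - e$ is odd by bipartiteness, placing $x,y$ in different classes of $\calA - e$, so no independent vertex cover can avoid both. Thus $\calA$ is a forest with trees $T_1,\dots,T_k$ of sizes $m_i$ and bipartition class sizes $a_i\le b_i$; write $\tau_i=a_i$ and $S=\sum_i\tau_i$. The independent vertex covers of a tree are exactly its two bipartition classes, so the smallest valid $f_j$ selects the smaller class of every other tree (contributing $S-\tau_i$) and, in each component of $T_i - e_j$, the unique class avoiding the endpoint of $e_j$ (contributing some quantity $\gamma_j^{(i)}$). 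The condition $|f_j|\le n$ becomes $\gamma_j^{(i)}\le n - S + \tau_i$ for every edge of $T_i$.

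The crux is the \emph{tree inequality} $\max_{e\in E(T_i)} \gamma_e^{(i)} \ge m_i - \tau_i$, which I prove by observing that every tree has a leaf in its majority bipartition class---otherwise every vertex of the minority class has degree at least~$2$, forcing $|X|\ge|Y|+1$ via a short double count. Picking such a leaf $\ell$ and its incident edge $c\ell$, a direct computation shows $\gamma_{c\ell}^{(i)} = \max(a_i,b_i) - 1 = m_i - \tau_i$. Together with the previous constraint this yields $m_i \le n - S + 2\tau_i$ for each $i$.

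Summing gives $m \le k(n-S) + 2S = kn - (k-2)S$, and since $S\ge k$ this implies $m \le k(n-k+2)$ for $k\ge 2$; the cases $k=1$ and the odd cycle contribute at most $2n+1$. Maximising $k(n-k+2)$ over integers $k$ yields $m \le (\lfloor n/2\rfloor+1)(\lceil n/2\rceil+1)$, attained by $\lfloor n/2\rfloor+1$ disjoint copies of the star $K_{1,\,\lceil n/2\rceil +1}$. Finally, a singleton $\{v\}$ in $\calA$ cannot share its vertex with any pair-edge, forces every other $f_j$ to contain $v$, and thus effectively replaces $n$ by $n - s$ in the pair-edge analysis; this never improves the bound. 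The hardest step is the tree inequality together with its leaf-in-majority lemma; a secondary subtlety is checking that the single-tree and odd-cycle regimes are dominated by the main bound precisely when $n\ge 4$.
\end{proofsketch}
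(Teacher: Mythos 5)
Note first that this paper does not actually prove \Cref{thm:2nsps}: it is quoted from~\cite{FGK23}, and the only related material here is the proof of \Cref{thm:order-sps} in \Cref{sec:SPSs}, which simply imports from~\cite{FGK23} the fact that $\calA$ is either an odd cycle or a union of trees. Your argument is essentially correct and, pleasingly, reconstructs that architecture from scratch: the observation that $f_j\cap V(\calA)$ must be an independent vertex cover of $\calA-e_j$ missing both endpoints of $e_j$ does force bipartiteness of every $\calA-e_j$ (hence the odd-cycle alternative) and, in the bipartite case, forces every edge to be a bridge; the count $|f_j|\ge S-\tau_i+\gamma_j^{(i)}$ together with $|f_j|\le n$ and the optimisation of $k(n-k+2)$ then gives exactly $(\lfloor n/2\rfloor+1)(\lceil n/2\rceil+1)$, matched by disjoint stars. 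Where you genuinely diverge from the toolkit visible in this paper is the tree lemma: in place of the averaged inequality $\sum_{e\in E(T)}\lvert V_e\rvert\ge\lfloor t^2/2\rfloor$ (\Cref{clm:sumVi}, of which \cite[Lemma~3.2]{FGK23} is a corollary), you exhibit a single edge --- one incident to a leaf lying in the larger colour class --- with $\gamma=b_i-1=m_i-\tau_i$ exactly. This pointwise substitute is simpler (no induction over the tree) and suffices here, though the summed version is what the order-counting in \Cref{thm:order-sps} actually needs. Two small blemishes, neither fatal: (i) your justification of ``every tree has a leaf in its majority class'' has the classes swapped; the correct count is that if no vertex of the majority class $Y$ is a leaf then $|E|\ge 2|Y|$, contradicting $|E|=|X|+|Y|-1\le 2|Y|-1$. (ii) The singleton-edge reduction and the check that the $k=1$ and odd-cycle regimes (each giving $m\le 2n+1$) are dominated by the main bound for $n\ge 4$ are asserted rather than verified, but both are routine.
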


We establish the following order-analogue of \Cref{thm:2nsps}.

\begin{theorem}\label{thm:order-sps}
    Let $(\A,\B)$ be a $(2,n)$-bounded $1$-cross-intersecting SPS.
    Then the number of non-isolated vertices is bounded by
    \[
    \frac{(n+3)^3}{27}+\frac{(n+3)}{3},
    \]
    and this is the best possible.
\end{theorem}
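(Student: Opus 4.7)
The plan is to decompose the order as
\[
\ord(\calH) = |V_A| + |V_B \setminus V_A|,
\]
where $V_A = \bigcup_i e_i$ and $V_B = \bigcup_j f_j$ are the non-isolated vertex sets on the two sides. The filler vertices in $V_B \setminus V_A$ impose no structural constraint beyond lying in some $f_j$, so taking them all pairwise distinct yields the worst case:
\[
|V_B \setminus V_A| \le \sum_{j=1}^{m} \bigl(|f_j| - |f_j \cap V_A|\bigr) \le mn - S, \qquad S \defeq \sum_{j=1}^{m} |f_j \cap V_A|.
\]
Hence $\ord(\calH) \le |V_A| + mn - S$, and the task reduces to a matching lower bound on $S$.

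Writing $n_v \defeq d_{\calA}(v)$ and observing $S = \sum_{v \in V_A} d_{\calB}(v)$, we have $\sum_v n_v \le 2m$ (from the size constraint on $\calA$-edges) and, by double-counting triples $(i,j,v)$ with $i \neq j$ and $v \in e_i \cap f_j$,
\[
\sum_{v \in V_A} n_v\, d_{\calB}(v) = m(m-1).
\]
Moreover $d_{\calB}(v) \le m - n_v$, since $v \in e_j$ forces $v \notin f_j$. A further structural ingredient is that for every $j$ with $e_j = \{u_j, w_j\}$, the forced intersections with $\calA$-edges incident to $u_j$ or $w_j$ yield
\[
\bigl(N_{\calA}(u_j) \setminus \{w_j\}\bigr) \cup \bigl(N_{\calA}(w_j) \setminus \{u_j\}\bigr) \subseteq f_j \cap V_A,
\]
while the remaining ``distant'' edges of $\calA \setminus \{e_j\}$ must each be covered by a further vertex of $f_j \cap V_A$.

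The core step is a minimization of $S$ under these constraints. A short perturbation argument first eliminates singleton $\calA$-edges and common $\calA$-neighbors of $u_j, w_j$ as non-extremal; solving the associated Lagrangian then shows that the extremizers of $\calA$ are disjoint unions of $k$ stars of common size $p$. In such a configuration, every $f_j$ contains the $p-1$ remaining leaves of its star together with the $k-1$ centers of the other stars, so $|f_j \cap V_A| = p + k - 2$, $m = kp$, $|V_A| = k(p+1)$, and $S = m(k+p-2)$. Substituting into $\ord(\calH) \le |V_A| + mn - S$ gives
\[
\ord(\calH) \le k + kp\,(n+3-k-p),
\]
and elementary calculus over positive reals $k,p$ places the maximum at $k = p = (n+3)/3$, with value $(n+3)^3/27 + (n+3)/3$.

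The main obstacle is justifying the reduction to the star form: a priori, cycles, overlapping stars, or denser bipartite subgraphs in $\calA$ could be competitive. The key idea is that any such deviation either inflates $S$ via the forced-inclusion observation above, or shrinks $|V_A|$ or $m$; the identity $\sum_v n_v d_{\calB}(v) = m(m-1)$ together with $\sum_v n_v \le 2m$ then makes the trade-off precise. For the matching lower bound, the explicit construction is the $k$-star SPS with $k = p = \lfloor (n+3)/3 \rfloor$ (adjusted slightly for divisibility), completed with pairwise-disjoint filler vertices saturating each $f_j$ at size $n$.
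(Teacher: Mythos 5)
Your outer framework (decompose $\ord$ as $|V_A| + |V_B\setminus V_A|$, reduce to a lower bound on $S=\sum_j|f_j\cap V_A|$, and optimize over disjoint unions of equal stars) matches the paper's endgame, and your final computation $kp(n-k-p+3)+k$ maximized at $k=p=(n+3)/3$, together with the extremal construction, is correct. However, there is a genuine gap at the core step: the reduction to ``$\A$ is a disjoint union of stars'' is asserted via an unexecuted ``perturbation plus Lagrangian'' argument, and the constraints you actually write down are not sufficient to force it. Concretely, minimizing $\sum_v d_{\B}(v)$ subject only to $\sum_v n_v d_{\B}(v)=m(m-1)$, $d_{\B}(v)\le m-n_v$ and $\sum_v n_v\le 2m$ is a linear program whose optimum concentrates $d_{\B}$ on a few vertices with $n_v\approx m/2$, yielding $S=O(m)$ rather than the required $S\approx m(k+p-2)=\Theta(mn)$; with $S$ that small your bound degenerates to roughly $mn$, which via the size bound $m\le(\lfloor n/2\rfloor+1)(\lceil n/2\rceil+1)$ only gives order about $n^3/4$, far above $(n+3)^3/27$. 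The forced-inclusion observation $\bigl(N_{\A}(u_j)\setminus\{w_j\}\bigr)\cup\bigl(N_{\A}(w_j)\setminus\{u_j\}\bigr)\subseteq f_j\cap V_A$ is the right kind of extra constraint, but it alone does not close the gap either, because a single high-degree vertex of $f_j$ can cover many ``distant'' edges of $\A$ at once; one must first control the global structure of $\A$.

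The paper supplies exactly the missing structural input: it invokes the result of F\"uredi, Gy\'arf\'as and Kir\'aly that $\A$ is either an odd cycle or a union of trees (ruling out, e.g., even cycles and dense bipartite pieces, which would otherwise make $S$ small), then proves by induction on trees that $\sum_i|V_i|\ge\lfloor t^2/2\rfloor$ for the odd-distance sets $V_i$, and finally performs an explicit replacement of any non-star tree by two stars that provably does not decrease the order. Your proposal needs either to import that dichotomy or to prove an equivalent structural statement; as written, the claim that ``any deviation either inflates $S$ or shrinks $|V_A|$ or $m$'' is precisely the content that must be proved, and the identity $\sum_v n_v d_{\B}(v)=m(m-1)$ does not by itself make that trade-off precise.
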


%----------------------------------------------------------

\section{Intersecting hypergraphs with bounded maximum degree}\label{sec:bounded-max-degree}

In this section, we prove \Cref{thm:furedi-order}. To do so, we rely on a result regarding the size of the \emph{kernel} of intersecting hypergraphs. The intersecting property imposes a strong structural restriction on the distribution of edges in a hypergraph, indicating that they cannot be evenly distributed across the vertex set. This suggests the existence of a vertex subset of bounded size that captures all the intersecting properties of a given hypergraph, motivating the following notion of a kernel.

\begin{definition}
    Let $\calH$ be an intersecting hypergraph. %We say 
    A vertex subset $A \subseteq V(\calH)$ is called a \emph{kernel} of $\calH$ if %$A$
    it satisfies the following conditions:
    \begin{enumerate}
        \item[$\bullet$] For all $e\in E(\calH)$, the intersection $A\cap e$ is %not empty
        non-empty.
        \item[$\bullet$] The hypergraph $\calH'$ with $V(\calH') = A$ and $E(\calH') = \{A\cap e: e\in E(\calH)\}$ is %an intersecting hypergraph
        intersecting. 
    \end{enumerate}
    We denote by $\kernel(\calH)$ the minimum size of a kernel of $\calH$.
\end{definition}

It is well known that every intersecting $k$-graph has a kernel whose size depends only on $k$. The best known bound on the minimum kernel size is given by the following theorem.

\begin{theorem}[Majumder~\cite{Majumder}]\label{thm:minimum-kernel}
    Let $\calH$ be an intersecting $k$-graph. Then 
    $$\kernel (\calH) \leq \frac{3}{2}\binom{2k-2}{k-1}.$$
\end{theorem}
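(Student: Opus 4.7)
The plan is to establish the bound by combining a greedy augmentation of an arbitrary starting edge with a Bollob\'as-type set-pair inequality controlling the growth, followed by a refined accounting that extracts the factor $\tfrac{3}{2}$.

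First, I would fix an edge $e_0 \in E(\calH)$ and set $A_0 = e_0$. Since $\calH$ is intersecting, every edge meets $e_0$, so $A_0$ already satisfies the first kernel condition. I then iteratively augment: while the traces $\{A_i \cap e : e \in E(\calH)\}$ are not intersecting, pick edges $e, f \in E(\calH)$ with $A_i \cap e \cap f = \varnothing$; since $e \cap f \neq \varnothing$ by the intersecting property, choose some $v_{i+1} \in e \cap f$ and set $A_{i+1} = A_i \cup \{v_{i+1}\}$. The process terminates with a kernel $A$, so it suffices to bound the number of augmentation steps.

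Second, I would bound this number via the Bollob\'as set-pair inequality. For each step $i$, record the witnessing pair $(e_i, f_i)$ and associate sets $X_i \subseteq e_i$ and $Y_i \subseteq f_i$ that are disjoint from $A_i$, with $|X_i|, |Y_i| \leq k-1$. A careful choice of $X_i, Y_i$ (for instance, tracking the private vertices introduced into the trace structure at step $i$) ensures that the resulting pairs are cross-intersecting in the Bollob\'as sense, i.e., $X_i \cap Y_i = \varnothing$ and $X_i \cap Y_j \neq \varnothing$ for $i \neq j$. Applying the Bollob\'as inequality then yields $\sum_i \binom{|X_i|+|Y_i|}{|X_i|}^{-1} \leq 1$, giving at most $\binom{2k-2}{k-1}$ augmentations and an initial bound $|A| \leq k + \binom{2k-2}{k-1}$.

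Third, to sharpen the constant from essentially $1$ to $\tfrac{3}{2}$, I would classify augmentations according to how the chosen witness vertex $v_{i+1}$ interacts with $e_0$ and with the previously chosen witnesses, and use a weighted version of the set-pair inequality with weights depending on $|X_i|+|Y_i|$. The crux is that the Bollob\'as bound is saturated only by set-pair systems with a very rigid structure (close to a complete $(k-1)$-uniform design), which cannot coexist with a long sequence of augmentations that all introduce genuinely new vertices disjoint from $e_0$; a pigeonhole-type argument then forces an average gain of $\tfrac{1}{2}$ per witness, producing the extra $\tfrac{1}{2}\binom{2k-2}{k-1}$ slack and hence the claimed bound. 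I expect this last step to carry the bulk of the technical difficulty: the naive Bollob\'as argument is too coarse, and obtaining the refined constant $\tfrac{3}{2}$ requires delicate control over how the witnessing pairs overlap, which is where Majumder's argument is most subtle.
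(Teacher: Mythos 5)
First, note that the paper does not prove this statement: it is quoted from Majumder's work and used as a black box, so there is no in-paper argument to compare yours against. Judged on its own terms, your proposal has a genuine gap at its load-bearing step. When you record the witnessing pair $(e_i,f_i)$ with $A_i\cap e_i\cap f_i=\varnothing$ and try to form sets $X_i\subseteq e_i\setminus A_i$, $Y_i\subseteq f_i\setminus A_i$, the full Bollob\'as cross-intersecting condition $X_i\cap Y_j\neq\varnothing$ for all $i\neq j$ is simply not available, and you give no mechanism for securing it. The intersecting property of $\calH$ only guarantees that $e_i\cap f_j\neq\varnothing$ somewhere, and for $j>i$ that intersection may well lie inside $A_i$ (for instance inside the starting edge $e_0$), in which case $X_i$ and $Y_j$ are disjoint. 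What this construction naturally yields is the much weaker, symmetric condition $(X_i\cap Y_j)\cup(X_j\cap Y_i)\neq\varnothing$ --- a so-called intersecting set-pair system --- and for those the Bollob\'as inequality fails: their maximum size with parts of size $k-1$ exceeds $\binom{2k-2}{k-1}$. This is precisely why the known kernel bounds carry a constant larger than $1$ in front of $\binom{2k-2}{k-1}$, and it is the entire difficulty of the theorem; asserting that ``a careful choice ensures cross-intersection'' assumes the hard part away.

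There is also an internal inconsistency that signals the argument is not under control: if your step 2 actually delivered $|A|\le k+\binom{2k-2}{k-1}$, you would already be done, since $k\le\frac12\binom{2k-2}{k-1}$ for all $k\ge3$ (and the small cases are trivial). Your step 3 then ``sharpens'' a sufficient bound into the \emph{weaker} claimed one, via a pigeonhole argument whose mechanism (``an average gain of $\frac12$ per witness'') is never specified. As it stands, neither the bound on the number of augmentation steps nor the constant $\frac32$ is established; a correct proof has to engage with the extremal theory of the weak, one-sided set-pair systems rather than with Bollob\'as's theorem.
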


With this result in hand, we are now ready to prove \Cref{thm:furedi-order}.

\begin{proof}[Proof of \Cref{thm:furedi-order}]
    Let $\calH$ be an intersecting $k$-graph with maximum degree at most $\Delta$. Let $A$ be a smallest kernel of $\calH$. Then by \Cref{thm:minimum-kernel}, we have
    \begin{equation}\label{eq:1}
        |A| \leq \frac{3}{2}\binom{2k-2}{k-1}.
    \end{equation}
    
    Consider a multi-hypergraph $\calA$ such that $V(\calA) = A$ and $E(\calA) = \{e\cap A: e\in \calH\}$. By the definition of the kernel, %the hypergraph 
    $\calA$ is %also 
    an intersecting hypergraph. Let $e\in E(\calA)$ be an edge of smallest size. %that has the minimum size among all the edges in $\calA$     with minimum size. 
    Since $\calA$ is an intersecting hypergraph with maximum degree $\Delta$, we have
    \begin{equation}\label{eq:2}
        |E(\calA)| \leq |e|\Delta.
    \end{equation}

    Note that by the minimality of $e$, every edge $f\in \calH$ contributes at most $k - |f\cap A| \leq k - |e|$ to the number of non-isolated vertices outside of $A$. Hence, %we have
    \begin{equation}\label{eq:3}
        \ord (\calH) \leq |A| + \sum_{f\in E(\calH)} (k - |f\cap A|) \leq |A| + (k - |e|)|E(\calH)| .
    \end{equation}

    By combining \Cref{eq:1,eq:2,eq:3}, we obtain 
    \begin{equation*}
        \ord (\calH) \leq (k - |e|)|e|\Delta + \frac{3}{2}\binom{2k-2}{k-1} \leq \frac{1}{4}k^2 \Delta + \frac{3}{2}\binom{2k-2}{k-1}. 
    \end{equation*}
    This completes the proof.
\end{proof}

The following %proposition 
result shows that the %coefficient $\frac{1}{4}$ of the term $k^2 \Delta$ in the statement of \Cref{thm:furedi-order} is asymptotically tight
multiplicative factor $\frac{1}{4}$ in \Cref{thm:furedi-order} is tight.

\begin{proposition}\label{prop:furedi-order}
    Given a real number $\varepsilon > 0$, there exists $k_0 = k_0(\varepsilon) > 0$ such that for all $k \geq k_0$, there is an intersecting $k$-graph $\calH$ with maximum degree at most $\Delta$ and
    $$\ord (\calH) \geq \left(\frac{1}{4} - \varepsilon \right)k^2 \Delta - \frac{1}{4}k^3.$$ Moreover, for a prime power $q$, if $k = 2(q+1)$ and $\Delta$ is divisible by $q+1$, then there exists an intersecting $k$-graph $\calH$ with maximum degree at most $\Delta$ such that
    $$\ord(\calH) \geq \left(\frac{k^2}{4} - \frac{k}{2} + 1 \right)(\Delta + 1).$$ 
\end{proposition}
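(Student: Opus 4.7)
\begin{proofsketch}
The plan is to revisit the proof of \Cref{thm:furedi-order} and produce a matching construction based on a projective plane: within a small ``kernel'' sit the lines of a projective plane of order $q$ with $q+1 \approx k/2$, and each line is duplicated $t$ times and padded outside the kernel until it has size $k$. The moreover statement fixes the parameters so that the construction is exact, whereas the general bound requires controlling the gap between $k/2$ and the nearest usable prime power.

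I would first handle the moreover part. Given $k = 2(q+1)$ with $q$ a prime power and $\Delta = t(q+1)$, let $(P, \calL)$ be the projective plane of order $q$. For each line $L \in \calL$ and each $i \in [t]$, attach a fresh $(q+1)$-set $S_{L,i}$, pairwise disjoint across $(L,i)$ and disjoint from $P$, and set $E(\calH) \defeq \{L \cup S_{L,i} : L \in \calL,\; i \in [t]\}$. The verifications are immediate: each edge has size $2(q+1) = k$; any two distinct edges share a point of the intersection of their underlying lines; every $p \in P$ lies in exactly $t(q+1) = \Delta$ edges, while vertices of $\bigcup_{L,i} S_{L,i}$ have degree $1$. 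All vertices are non-isolated, so
\[
\ord(\calH) = (q^2+q+1)\bigl(1 + t(q+1)\bigr) = \Bigl(\tfrac{k^2}{4} - \tfrac{k}{2} + 1\Bigr)(\Delta+1),
\]
as claimed.

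For the general bound, fix $\varepsilon > 0$. I would invoke the prime number theorem (much more than enough here) to select, for $k \geq k_0(\varepsilon)$, a prime $q$ with $r \defeq q+1$ lying in $\bigl[k/2 - \delta k,\; k/2\bigr]$ for a suitably small $\delta = \delta(\varepsilon)$. Applying the above construction with this $q$ and $t \defeq \lfloor \Delta/r \rfloor$, but now padding each copy with $k-r$ fresh vertices rather than $q+1$, yields a $k$-uniform intersecting hypergraph of maximum degree $tr \leq \Delta$; one may assume $t \geq 1$, since otherwise the claimed lower bound is negative. The order then satisfies
\[
\ord(\calH) = (q^2+q+1)\bigl(1 + t(k-r)\bigr) \geq r(r-1)\cdot t(k-r) \geq (r-1)(k-r)(\Delta - r),
\]
using $q^2+q+1 \geq r(r-1)$ and $t \geq (\Delta - r)/r$.

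The identity $(r-1)(k-r) = (k-1)^2/4 - \bigl(r - (k+1)/2\bigr)^2$ together with the closeness of $r$ to $k/2$ gives $(r-1)(k-r) \geq (1/4 - \varepsilon)k^2$ for $k$ large, while the estimates $(r-1)(k-r) \leq k^2/4$ and $r \leq k/2$ yield $r(r-1)(k-r) \leq k^3/8 \leq k^3/4$. Expanding the displayed inequality therefore produces the claimed bound $(1/4 - \varepsilon)k^2 \Delta - k^3/4$. The only delicate ingredient is the prime gap estimate used to locate $q$ close to $k/2$; achieving the sharp multiplicative constant $1/4 - \varepsilon$ requires prime gaps of size $o(k)$ near $k/2$, which is a routine consequence of the prime number theorem. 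Everything else is mechanical verification of a classical projective-plane-based construction.
\end{proofsketch}
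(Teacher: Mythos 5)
Your proposal is correct and follows essentially the same route as the paper: take a projective plane of order $q$ with $q+1$ close to $k/2$ (located via the Prime Number Theorem), duplicate each line $\lfloor\Delta/(q+1)\rfloor$ times, and pad each copy with fresh vertices up to size $k$; the arithmetic matches, including the exact count $(q^2+q+1)(\Delta+1)$ in the moreover part. The only cosmetic difference is that you make explicit the degenerate case $t=0$ and the inequality $q^2+q+1\ge q(q+1)$, which the paper uses implicitly.
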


\begin{proof}
    It follows from the Prime Number Theorem that for any real $\varepsilon>0$, there exists $k_0>0$ such that for all $k>k_0$, there is a prime $q$ satisfying $\frac{1 - \varepsilon}{2} k \leq q < \frac{k}{2}$.
    Since $q$ is a prime, %power, 
    %there is 
    a projective plane $\calP$ of order $q$ exists. Note that $\calP$ is an intersecting $(q+1)$-graph with $q^2 + q + 1$ edges and the degree of every vertex in $\calP$ is $q+1$.
    Consider an intersecting multi-hypergraph $\calP'$ obtained from $\calP$ by replacing each edge of $\calP$ with $ \floorfrac{\Delta}{q+1}$ multiple edges.

    For each edge $f$ of $\calP'$, introduce $k-q-1$ distinct new vertices so that the union of $f$ and these vertices forms a new $k$-uniform hyperedge. Let $\calH$ be the resulting $k$-graph. Then by the construction, $\calH$ is an intersecting $k$-graph with maximum degree at most $\Delta$. In addition, we have 
    \begin{equation}\label{eq:order-lowerbound}
        \ord (\calH) \geq (q^2 + q + 1)\floorfrac{\Delta}{q+1} (k - q - 1) + q^2 + q + 1.
    \end{equation}

    As $(1 - \varepsilon)k/2\le q\le k/2$, the inequality \eqref{eq:order-lowerbound} implies
    $$
        \ord (\calH) \geq q(k - q - 1)\Delta - (k-q-1)(q^2+q+1) \geq \left(\frac{1}{4} - \varepsilon \right)k^2 \Delta - \frac{1}{4}k^3.
    $$
Finally, if $k = 2(q+1)$ and $q+1$ divides $\Delta$, then from inequality \eqref{eq:order-lowerbound}, we obtain
    $$
        \ord (\calH) \geq \left(\frac{k^2}{4} - \frac{k}{2} + 1 \right)(\Delta + 1).
    $$
    This completes the proof.
\end{proof}

%----------------------------------------------------------

\section{$1$-intersecting hypergraphs}\label{sec:1-intersecting}

To prove \Cref{thm:deza-order}, we begin by considering the special case of $1$-intersecting $k$-graphs. This case serves as a preliminary step in the proof of \Cref{thm:deza-order}. For this initial case, we can derive a stronger result that provides bounds not only for the order but also for the kernel of $1$-intersecting $k$-graphs.

\begin{theorem}\label{thm:1-intersecting}
 Let $\calH$ be a $1$-intersecting $k$-graph that is not a sunflower. 
 Then $$\ord (\calH) + \kernel (\calH) \le \frac{4}{27}k^3 + 4k^{5/2}.$$
\end{theorem}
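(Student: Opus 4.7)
The plan is to root everything at a vertex $v$ of maximum degree $\Delta:=d(v)$, decompose the edge set around $v$, build a small kernel containing ``shared extras'', and bound the number of vertices contributed by the edges not through $v$. Since $\mathcal{H}$ is not a sunflower, no vertex lies in every edge, so we can pick an edge $f$ avoiding $v$. Writing $e_1,\dots,e_\Delta$ for the edges through $v$ and $A_r:=e_r\setminus\{v\}$, the $1$-intersecting property forces the $A_r$ to be pairwise disjoint and $f$ to meet each $A_r$ in exactly one vertex, so $\Delta\le k$. Partition $E(\mathcal{H})=\{e_1,\dots,e_\Delta\}\cup\{f_1,\dots,f_y\}$ with $y=m-\Delta$; each $\bar f_i:=f_i\cap A$ (with $A:=\bigcup_r A_r$) is a transversal of the partition and $f_i$ has $k-\Delta$ \emph{extras} outside $V_0:=\{v\}\cup A$. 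Let $x$ be the number of non-isolated extras and $X_0$ the extras of degree at least $2$ in $\mathcal{H}$.

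Next I would check that $K:=V_0\cup X_0$ is a kernel: every edge meets $V_0\subseteq K$; and the unique intersection vertex of two distinct edges is either in $V_0$ or is a shared extra, which by definition lies in $X_0\subseteq K$. Hence $\kernel(\mathcal{H})\le 1+\Delta(k-1)+|X_0|$ and $\ord(\mathcal{H})=1+\Delta(k-1)+x$. A double count of extra-incidences gives
\[
    y(k-\Delta) \;=\; |X_{=1}| + \sum_{u\in X_0} d(u) \;\ge\; (x-|X_0|)+2|X_0| \;=\; x+|X_0|,
\]
so, summing the two estimates above,
\[
    \ord(\mathcal{H})+\kernel(\mathcal{H}) \;\le\; 2\bigl(1+\Delta(k-1)\bigr)+y(k-\Delta). \qquad(\star)
\]

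The main obstacle is to sharpen the trivial Deza-type bound $y\le(\Delta-1)(k-1)$ -- which when plugged into $(\star)$ yields only $k^3/4$ -- into an essentially cubic bound $y\le\Delta^2+O(k^{3/2})$; then $(\star)$ becomes $\Delta^2(k-\Delta)+O(\Delta k)+O(k^{5/2})$, whose maximum over $\Delta\in[0,k]$ is $\tfrac{4}{27}k^3$ attained at $\Delta=\tfrac{2k}{3}$. To obtain this I would analyze the restriction hypergraph $\{\bar f_1,\dots,\bar f_y\}$, which is a $\Delta$-partite, $\Delta$-uniform linear hypergraph on $A$: pairs $\{\bar f_i,\bar f_j\}$ meeting in one vertex are \emph{Type A} (the original $f_i,f_j$ meet inside $A$), and disjoint ones are \emph{Type B} (meet at a shared extra). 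If every pair is Type A, i.e.\ $X_0=\varnothing$, then $\{\bar f_i\}$ is a $1$-intersecting $\Delta$-graph, which is either a sunflower -- forcing $y\le\Delta-1$ because its core vertex would have degree $y+1\le\Delta$ in $\mathcal{H}$ -- or, by \Cref{thm:deza} applied with uniformity $\Delta$ and $\lambda=1$, $y\le\Delta^2-\Delta+1$. For the general case, Cauchy--Schwarz on the $A$-degrees $d_A(w):=|\{i:w\in f_i\}|\le\Delta-1$, using $\sum_{w\in A} d_A(w)=y\Delta$, gives the lower bound $\tfrac{\Delta y(y-k+1)}{2(k-1)}$ for the number of Type-A pairs and hence an upper bound of order $\tfrac{y^2(k-1-\Delta)}{2(k-1)}+O(y)$ for the number of Type-B pairs $\sum_{u\in X_0}\binom{d(u)}{2}$; combined with $d(u)\le\Delta$ and $\sum_{u\in X_0}d(u)\le y(k-\Delta)$, a second Cauchy--Schwarz shows that whenever $y\ge\Delta^2+Ck^{3/2}$ the ``savings'' $y(k-\Delta)-(x+|X_0|)=\sum_{u\in X_0}(d(u)-2)$ already exceeds $(y-\Delta^2)(k-\Delta)$ up to $O(k^{5/2})$, so the excess is absorbed. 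Finally, optimizing $(\star)$ over $\Delta\in\{1,\dots,k\}$ and tracking the lower-order terms delivers $\ord(\mathcal{H})+\kernel(\mathcal{H})\le\tfrac{4}{27}k^3+4k^{5/2}$.
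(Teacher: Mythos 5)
Your decomposition is genuinely different from the paper's (which takes $A$ to be the set of vertices of degree at least $\sqrt{k}$, uses that the minimum kernel of a $1$-intersecting hypergraph is exactly the set of vertices of degree at least $2$, and bounds $m$ by $\ell_1\ell_2+2k^{3/2}$ via the two smallest traces on $A$), and much of your skeleton is correct: the disjointness of the $A_r$, the bound $\Delta\le k$, the verification that $V_0\cup X_0$ is a kernel, the inequality $(\star)$, and the special case $X_0=\varnothing$ handled via \Cref{thm:deza}. The problem is the step that carries all the weight. Your Cauchy--Schwarz runs in the wrong direction: convexity of $d\mapsto\binom{d}{2}$ applied to the $A$-degrees gives a \emph{lower} bound on the number of Type-A pairs, hence an \emph{upper} bound on $\sum_{u\in X_0}\binom{d(u)}{2}$; but an upper bound on $\sum_{u\in X_0}\binom{d(u)}{2}$ together with $\sum_{u\in X_0} d(u)\le y(k-\Delta)$ and $d(u)\le\Delta$ implies nothing about the savings $\sum_{u\in X_0}(d(u)-2)$ --- every shared extra could have degree exactly $2$, satisfying all of your constraints with zero savings. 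To force savings you need a \emph{lower} bound on the number of Type-B pairs, which comes from an \emph{upper} bound on Type-A pairs via $d_A(w)\le\Delta-1$, namely
\[
\sum_{u\in X_0}\binom{d(u)}{2}\;=\;\binom{y}{2}-\#\{\text{Type-A pairs}\}\;\ge\;\binom{y}{2}-\sum_{w\in A}\binom{d_A(w)}{2}\;\ge\;\binom{y}{2}-\frac{y\Delta(\Delta-2)}{2}.
\]
This is the inequality your sketch never states, and without it the general case does not close.

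Moreover, even after this repair your claimed reduction is false. Minimizing $\sum_{u\in X_0}(d(u)-2)$ subject to the lower bound above and the budget $\sum_{u\in X_0}d(u)\le y(k-\Delta)$ (the minimizer mixes degree-$2$ and degree-$\Delta$ extras) only yields $\sum_{u\in X_0}(d(u)-2)\ge \big(y(y-(\Delta-1)^2)-y(k-\Delta)\big)/\Delta$. For $\Delta\approx k/3$ and $y\approx k^2/6$ this guarantees savings only about $k^3/36$, which is strictly below $(y-\Delta^2)(k-\Delta)\approx k^3/27$, so the excess is \emph{not} absorbed and you cannot conclude $y\le\Delta^2+O(k^{3/2})$. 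The approach can still be saved, but only by abandoning that reduction and maximizing $y(k-\Delta)-y(y-\Delta^2)/\Delta$ jointly over $\Delta$ and $y$: for $\Delta\le k/2$ the maximum over $y$ is attained near $y=\Delta k/2$ and gives about $\Delta k^2/4\le k^3/8$, while for $\Delta>k/2$ it is attained at $y\approx\Delta^2$ and gives $\Delta^2(k-\Delta)\le\frac{4}{27}k^3$; the overall maximum is $\frac{4}{27}k^3$ plus lower-order terms. That two-regime optimization is the actual heart of the argument, and it is absent from your proposal.
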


We first present several families of constructions that demonstrate the sharpness of the leading coefficient $\frac{4}{27}$ in \Cref{thm:1-intersecting}, as stated in the following proposition. Moreover, each family exhibits a different ratio between the order and the minimum kernel size. In particular, the family $\calH_0$ has a kernel of size $o(k^3)$ while its order is $(\frac{4}{27}-o(1))k^3$, whereas the family $\calH_{q+1}$ has the minimal kernel size $(\frac{2}{27}-o(1))k^3$, which is nearly equal to its order. 

\begin{proposition}\label{prop:order-tight}
    For all $k \ge 4$, there exist a prime number $q= (\frac{2}{3}-o(1))k$ and $1$-intersecting $k$-graphs $\calH_0,\dots, \calH_{q+1}$ which are not sunflowers and each $\calH_a$, $0\le a\le q+1$, satisfies $\kernel (\calH_a) = ak^2/9 + o(k^3)$ and $\ord (\calH_a)=(\frac 43k - a)k^2/9 + o(k^3)$, and thus
$$\ord (\calH_a) + \kernel (\calH_a) = \left(\frac{4}{27} - o_k(1)\right)k^3.$$ 
\end{proposition}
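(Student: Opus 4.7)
The plan is to exhibit, for every $0 \le a \le q+1$ with $q = (\tfrac{2}{3}-o(1))k$ a prime (which exists by the Prime Number Theorem), a $1$-intersecting non-sunflower $k$-graph $\calH_a$ realizing the claimed kernel and order. The base case $a=0$ is exactly the construction used in the proof of \Cref{prop:furedi-order}: take a projective plane $\calP$ of order $q$ and extend each of its $q^2+q+1$ lines to a $k$-edge by attaching $k-q-1$ pairwise disjoint private vertices. A direct count gives $\ord(\calH_0) = (q^2+q+1)(k-q) = \tfrac{4}{3}k \cdot \tfrac{k^2}{9} + o(k^3)$, and $V(\calP)$ is a kernel of size $q^2+q+1 = o(k^3)$, matching the $a = 0$ entry of the claimed family. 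Non-sunflowerness follows from the fact that no point of $\calP$ lies on every line.

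For $1 \le a \le q+1$, the plan is to modify $\calH_0$ in $a$ disjoint increments, each designed to push $\sim k^2/9$ fresh vertices into every minimum kernel while removing $\sim 2k^2/9$ private vertices from the order, so that the kernel grows by $k^2/9$ and the order shrinks by $k^2/9$ per increment. Rewriting $k^2/9 \sim (k-q-1)^2$, one can think of each increment as replacing the private tails of $\sim 2(k-q-1)$ selected $\calH_0$-edges by a pool of $\sim (k-q-1)^2$ new vertices, each designated as the unique intersection point of a specific pair of modified edges. Since any kernel of a $1$-intersecting hypergraph must contain every pairwise intersection of edges, the new vertices are forced into every kernel, giving the lower bound $\kernel(\calH_a) \ge ak^2/9 - o(k^3)$; an explicit kernel of the same size supplies the matching upper bound, and a direct count of non-isolated vertices yields $\ord(\calH_a) = (\tfrac{4}{3}k - a)\tfrac{k^2}{9} + o(k^3)$. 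The non-sunflower property is inherited from $\calH_0$, since no vertex lies in every edge of $\calH_a$.

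The main obstacle is engineering each increment so that global $1$-intersection is preserved: every new edge must share exactly one vertex with every other edge of $\calH_a$, whether that edge is an unmodified $\calH_0$-edge, belongs to the same increment, or belongs to a different increment. Because the private tails of distinct $\calH_0$-edges are pairwise disjoint by construction, any added shared vertex must be introduced compatibly with the incidence structure of $\calP$; for instance, whenever a new shared vertex is placed on a pair of edges that already intersected at a projective-plane point, one must simultaneously remove that projective-plane incidence from one of the edges to avoid an intersection of size two. A secondary task is verifying the kernel lower bound, which requires ensuring that the $\sim ak^2/9$ new shared vertices are genuinely distinct across increments and cannot be simultaneously avoided by any candidate kernel; this follows by tracking, for each new vertex $w$, the unique pair of edges whose intersection it witnesses.
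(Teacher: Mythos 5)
Your numerology is exactly right --- each ``increment'' should convert $\sim 2k^2/9$ degree-one vertices into $\sim k^2/9$ degree-two vertices, so that $\kernel$ grows and $\ord$ shrinks by $k^2/9$ while the sum stays at $(\tfrac{4}{27}-o(1))k^3$ --- and your observation that the minimum kernel of a $1$-intersecting hypergraph is forced to contain every vertex of degree at least two is correct and is how the paper computes $\kernel(\calH_a)$. But the proposal has a genuine gap: the entire content of the proposition is the existence of a construction in which these increments can actually be carried out, and you explicitly defer this (``the main obstacle is engineering each increment so that global $1$-intersection is preserved'') without resolving it. As you note yourself, in $\calH_0$ every pair of lines of $\calP$ already meets, so designating a \emph{new} vertex as ``the unique intersection point of a specific pair of modified edges'' forces you to delete the old projective-plane incidence from one of the two lines --- and deleting a point from a line destroys its $1$-intersection with every \emph{other} line through that point. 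There is no local fix; the deletions must be organized globally, and the proposal offers no scheme for doing so, nor does it verify that each edge stays within its budget of $k$ vertices after all additions.

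The missing idea, which is the crux of the paper's proof, is to delete one line $\ell$ of $\calP$ from the edge set (equivalently, to work in the affine plane of order $q$), so that the remaining $q^2+q$ lines fall into $q+1$ parallel classes of $q$ pairwise disjoint lines, each class corresponding to a point of $\ell$. An ``increment'' is then the splitting of one parallel class: its point at infinity is duplicated, each half of the class (of size $\lfloor q/2\rfloor$ or $\lceil q/2\rceil$) keeps one copy, and only the $\lfloor q^2/4\rfloor$ \emph{cross} pairs --- which are genuinely disjoint at that stage --- receive new private degree-two vertices. This makes the increments automatically compatible with each other and with the unmodified lines, and the feasibility constraint $q+1+\lceil q/2\rceil\le k$ is exactly what dictates the choice $q=(\tfrac23-o(1))k\le\tfrac23(k-1)$, a point your proposal also leaves unchecked. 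Without this (or an equivalent) organizing device, the proof is incomplete.
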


\begin{proof}
    By the Prime Number Theorem, there exists a prime $q$ such that $q=(\frac23-o(1))k\le \frac 23 (k-1)$. Consider a projective plane $\mathcal{P}$ of order $q$, and choose a line $\ell=\{v_1, \ldots, v_{q+1}\}$. Remove this line $\ell$ from the edge set of $\mathcal{P}$. Next, choose a subset $A \subset [q+1]$ of size $0 \le a \le q+1$. For each $i \in A$, duplicate the vertex $v_i$ into two vertices $\{v_i, v'_i\}$. Then, for $\left\lfloor \frac{q}{2} \right\rfloor$ of the $q$ lines (excluding $\ell$) that pass through $v_i$, replace $v_i$ with $v'_i$ on each of those lines.

Equivalently, we can begin with an affine plane of order $q$ and select $a$ parallel classes (each consisting of $q$ pairwise disjoint lines). For each of these $a$ parallel classes, split the lines evenly into two groups and add a new common vertex to each group. For the remaining $q + 1 - a$ parallel classes, add a distinct common vertex to each of the $q$ lines within the class.

At this point, we have a $(q+1)$-uniform hypergraph in which every pair of edges intersects in exactly one vertex, except for the $a \left\lfloor \frac{q^2}{4} \right\rfloor$ pairs of edges arising from the $a$ parallel classes corresponding to the set $A$.

For every pair of non-intersecting edges, we add a new vertex shared by both edges. Since each edge is disjoint from at most $\left\lceil \frac{q}{2} \right\rceil $ other edges, the number of vertices in each edge after this vertex addition becomes at most $q+1+ \left\lceil \frac{q}{2} \right\rceil \leq k$. Finally, we add new distinct vertices to each edge as needed, so that every edge contains exactly $k$ vertices. 
    This results in a non-trivial $1$-intersecting $k$-graph $\calH_a$.
    As $\calH_a$ is $1$-intersecting, its kernel must contain all vertices with degree greater than $1$, thus the kernel of $\calH_a$ contains exactly $q^2+q+1+a+a\floorfrac{q^2}4 =  a\floorfrac{q^2}{4} + O(k^2)$ vertices.
    
    On the other hand, all $aq$ edges from the chosen $a$ parallel classes contain $(k-q-1-\left\lceil \frac{q}{2}\right\rceil)$ or $(k-q-1-\left\lfloor \frac{q}{2}\right\rfloor)$ vertices of degree $1$ and the remaining $(q+1-a)q$ edges contain $(k-q-1)$ vertices of degree $1$, thus     
    the number of vertices with degree $1$ in $\mathcal{H}$ is
    $$ aq\left(k-q-1-\left\lceil \frac{q}{2}\right\rceil \right)+O(aq) + (q+1-a)q(k-q-1)=\left(\frac 23k-a\right)\frac{2k^2}{9}+o(k^3).$$
As the first term is $o(k^3)$, we can obtain  
    \begin{align*}
        \ord (\calH_a) + \kernel (\calH)&=2\kernel(\calH)+|\{v\in V(\calH): \mathrm{deg}(v) = 1\}| \\ &= 2a\floorfrac{q^2}4+(q+1-a)q(k-q-1)+ o(k^3) = \Big(\frac{4}{27} - o_k(1) \Big)k^3. \qedhere
    \end{align*}
\end{proof}

We now prove \Cref{thm:1-intersecting}.

\begin{proof}[Proof of \Cref{thm:1-intersecting}]
    Let $\calH$ be a $1$-intersecting $k$-graph that is not a sunflower. As $\calH$ is not a sunflower, there are at least $3$ edges. 
    
    We first claim that every vertex in $\calH$ has degree at most $k$.
    To see this, let $v$ be a vertex with maximum degree belonging to edges $e_1, e_2, \ldots, e_{\Delta}$. Since $\calH$ is not a sunflower, there is an edge $e$ that does not contain $v$. The intersecting property implies that $e$ intersects each of $e_1 \backslash v, e_2\backslash v, \ldots, e_{\Delta}\backslash v$. Since the latter are pairwise disjoint, we conclude that $k=\abs{e}\ge \Delta$. 

    Let $A \subseteq V(\calH)$ be the set of vertices of $\calH$ with degree at least $\sqrt{k}$. We remark that for every edge $e \in E(\calH)$, because $\calH$ is $1$-intersecting and every vertex has degree at most $k$, we have  
        \begin{equation*}
            \abs{E(\calH)}= 1+\sum_{v \in e} \left(\deg(v)-1\right) < \sum_{v \in e} \deg(v)<k|e\cap A|+k\sqrt{k}.
        \end{equation*}        
    Consequently, if there is an edge $e\in E(\calH)$ with $\abs{e \cap A }< \sqrt k$, then $\abs{E(\calH)} < 2k \sqrt{k}$ and thus $$\ord (\calH) + \kernel (\calH) \leq 2\ord (\calH)  < 2 k \abs{E(\calH)} < 4k^2 \sqrt{k} < \frac{4}{27}k^3 + 4 k^{5/2}.$$ 
    We may then assume that for every $e\in E(\calH)$, $|e\cap A|\ge\sqrt{k}$.

    Let $E(\calH) = \{e_1, \dots, e_m\}$. 
    For each $i\in [m]$, let $e'_i= e_i\cap A$ and let $\ell_i=\abs{e'_i}$. As $\ell_i\geq \sqrt{k}$ and $\calH$ is $1$-intersecting, all sets $e'_1,\dots, e'_m$ are distinct. We assume $\ell_1\leq \ell_2 \leq \dots \leq \ell_{m}$. 

    Note that for every edge $e \in E(\calH)\backslash \{e_1, e_2\}$, either it intersects both $e'_1$ and $e'_2$, or it contains a vertex in $(e_1 \cup e_2) \setminus A$.
    As $\calH$ is $1$-intersecting, depending on $\lvert e'_1 \cap e'_2 \rvert$ being $0$ or $1$, there are at most $\ell_1\ell_2$ or $(\ell_1-1)(\ell_2-1)+k$ edges in $E(\calH)\setminus \{e_1,e_2\}$ that intersect both $e'_i$ and $e'_2$ (the further computations work along the same lines, for simplicity of presentation we only work with the former case). On the other hand, since all the vertices outside of $A$ have degree less than $\sqrt{k}$ in $\calH$, the number of edges $e\in E(\calH)\backslash \{e_1, e_2\}$ that contains a vertex in $(e_1 \cup e_2)\backslash A$ is bounded above by $\sqrt{k} |(e_1 \cup e_2)\backslash A| \leq 2k\sqrt{k} -2$. These imply 
    \begin{equation*}
        m = \abs{E(\calH)} \leq 2 + \ell_1 \ell_2 + (2k \sqrt{k} - 2) = \ell_1 \ell_2 + 2k\sqrt{k}.
    \end{equation*}
In a $1$-intersecting hypergraph, the minimum kernel is exactly the set of all vertices of degree more than $1$. Hence, $\ord (\calH) + \kernel (\calH)$ counts all degree $1$ vertices once and other vertices twice. As every vertex in $A$ has degree more than $1$, we have
  \begin{align}\label{eq:1-intersecting-upper}
       \ord (\calH) + \kernel (\calH) & \leq 2|A| + \sum_{v\notin A} d(v) \leq 2|A|+ \sum_{i\in [m]}(k-\ell_i) \nonumber \\  &\leq 2|A|+ (m-1)(k-\ell_2) + (k - \ell_1)  \le 2|A|+  \ell_2^2 (k - \ell_2) + 2k^{5/2} .
       \end{align}

    In order to finish the proof, we need to obtain an upper bound for the size of $A$. By the definition of $A$, every vertex of $A$ has degree at least $\sqrt{k}$, thus 
    $$\sqrt{k}|A|\leq \sum_{v\in A} d(v) \leq k |E(\calH)|\leq k^3,$$
    where the final inequality follows from the fact that $|E(\calH)|\leq \sum_{v\in e} d(v) \leq k^2$ for any edge $e\in E(\calH)$ in the $1$-intersecting hypergraph $\calH$. Thus, we have $|A|\leq k^{5/2}$. This together with \eqref{eq:1-intersecting-upper} implies the following
    $$
        \ord (\calH) + \kernel (\calH) \leq \ell_2^2 (k - \ell_2) + 4k^{5/2} \leq \frac{4}{27}k^3 + 4k^{5/2}.
    $$
    This completes the proof.
\end{proof} 

%-----------------------------------------------------------------

\section{$\lambda$-intersecting hypergraphs}\label{sec:lambda-intersecting}

In this section, we prove \Cref{thm:deza-order}. 
Before discussing the proof of \Cref{thm:deza-order}, we first provide a construction of $\lambda$-intersecting $k$-graphs demonstrating that \Cref{thm:deza-order} is tight up to $o(k^3)$ term whenever $\lambda = o(k)$. 

\begin{proposition}\label{prop:first-construction}
    For a given $\lambda = k-\Omega(k)$, there exists a $\lambda$-intersecting $k$-graph $\calH$ which is not a sunflower such that
    $$
        \ord (\calH) \geq \left(\frac{4}{27} - o_k(1)\right)(k-\lambda)^3.
    $$
\end{proposition}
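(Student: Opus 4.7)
\begin{proofsketch}
The plan is to bootstrap the $1$-intersecting construction from \Cref{prop:order-tight}. Let $k' \defeq k - \lambda + 1$; since $k - \lambda = \Omega(k)$, $k'$ tends to infinity with $k$.

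First, I would apply \Cref{prop:order-tight} with $a = 0$ and uniformity $k'$ in place of $k$ to obtain a $1$-intersecting $k'$-graph $\calH_0$ that is not a sunflower and satisfies
\[
\ord(\calH_0) \geq \left(\frac{4}{27} - o_{k'}(1)\right)(k')^3.
\]

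Next, I would pick a set $C$ of $\lambda - 1$ new vertices disjoint from $V(\calH_0)$ and form $\calH$ by adjoining every vertex of $C$ to every edge of $\calH_0$. Every edge of $\calH$ then has $k' + (\lambda - 1) = k$ vertices, and any two distinct edges meet in exactly $1 + (\lambda - 1) = \lambda$ vertices, so $\calH$ is a $\lambda$-intersecting $k$-graph. Because $\calH_0$ is not a sunflower, there exist edges $e_1, e_2, e_3 \in E(\calH_0)$ with $e_1 \cap e_2 \supsetneq e_1 \cap e_2 \cap e_3$ (indeed, in a $1$-intersecting hypergraph that is not a sunflower some triple of edges must have empty common intersection); the same three edges, after adjoining $C$ to each, witness that $\calH$ is not a sunflower either.

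Finally, $\ord(\calH) = \ord(\calH_0) + (\lambda - 1)$. Since $k - \lambda = \Omega(k)$, the second term is $O(k - \lambda) = o\big((k - \lambda)^3\big)$, which is absorbed into the error and yields
\[
\ord(\calH) \geq \left(\frac{4}{27} - o_k(1)\right)(k - \lambda)^3,
\]
as required. No major obstacle is anticipated: all of the combinatorial heavy lifting is packaged in \Cref{prop:order-tight}, and the only subtlety is the routine bookkeeping that adjoining a common core of $\lambda - 1$ vertices simultaneously inflates the uniformity to $k$, raises the pairwise intersection size to $\lambda$, and preserves the non-sunflower property.
\end{proofsketch}
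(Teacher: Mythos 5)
Your proposal is correct and follows essentially the same route as the paper: invoke \Cref{prop:order-tight} to obtain a non-sunflower $1$-intersecting $(k-\lambda+1)$-graph of order $(\frac{4}{27}-o_k(1))(k-\lambda)^3$, then adjoin a common set of $\lambda-1$ new vertices to every edge, which raises the uniformity to $k$ and the intersection size to $\lambda$ while preserving the non-sunflower property and adding only $\lambda-1=o((k-\lambda)^3)$ to the order. The only cosmetic difference is that you fix $a=0$ explicitly and spell out the non-sunflower verification, which the paper leaves implicit.
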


\begin{proof}
    By \Cref{prop:order-tight}, there exists a $1$-intersecting $(k-\lambda+1)$-graph $\calH'$ which is not a sunflower such that
    \begin{equation}\label{eq:H'}
        \ord (\calH') \geq \left(\frac{4}{27} - o_k(1) \right)(k-\lambda)^3.
    \end{equation}
    Now we introduce $\lambda - 1$ new vertices and replace each edge $e'$ of $\calH'$ with the union of $e'$ and all the newly introduced vertices. Let $\calH$ be a hypergraph that is obtained from this. Then $\calH$ is a $\lambda$-intersecting $k$-graph. Since $\calH'$ is not a sunflower, neither is $\calH$. Hence, by \eqref{eq:H'}, we have
    $$
        \ord (\calH)=\ord (\calH')+\lambda-1 \geq \left(\frac{4}{27} - o_k(1) \right)(k-\lambda)^3.
    $$ This proves the proposition.
\end{proof}

In order to prove \Cref{thm:deza-order}, we need to collect several structural properties on $\lambda$-intersecting $k$-graphs.
For convenience, throughout this section, we denote 
$$\mu:=k - \lambda.$$ 
We first observe the following simple lemma.

\begin{lemma}\label{lem:edge-vertex}
    Let $\calH$ be a $\lambda$-intersecting $k$-graph with $m$ edges. Then $\ord(\calH) \leq \mu m + \lambda$.
\end{lemma}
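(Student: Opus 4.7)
The plan is extremely short, as the statement reduces to a single union bound once we exploit the $\lambda$-intersecting condition. The idea is to pin down one edge as an anchor and count how many new vertices each remaining edge can contribute.

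Concretely, I would proceed as follows. If $m=0$ there is nothing to prove, and if $m=1$ then $\ord(\calH)=k=\mu+\lambda$, which matches the claimed bound. So assume $m\ge 2$ and fix an arbitrary edge $e_1\in E(\calH)$. For any other edge $e_i$ we have $|e_i\cap e_1|=\lambda$ by the $\lambda$-intersecting property, hence $|e_i\setminus e_1|=k-\lambda=\mu$. Writing
\[
\ord(\calH)\;=\;\left|\bigcup_{i=1}^{m}e_i\right|\;\le\;|e_1|+\sum_{i=2}^{m}|e_i\setminus e_1|\;=\;k+(m-1)\mu\;=\;\lambda+m\mu,
\]
yields the desired inequality.

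There is essentially no obstacle: the lemma is a straightforward consequence of the inclusion-exclusion bound $|\bigcup X_i|\le |X_1|+\sum_{i\ge 2}|X_i\setminus X_1|$ applied with $X_i=e_i$, together with the fact that $|e_i\setminus e_1|$ is controlled \emph{uniformly} by $\mu$ thanks to the equality $|e_i\cap e_1|=\lambda$ (not merely $\ge\lambda$). The only thing worth noting is that the bound is tight whenever $\calH$ is a sunflower with core of size $\lambda$, which is the extremal configuration one should keep in mind when using the lemma later in the proof of Theorem~\ref{thm:deza-order}.
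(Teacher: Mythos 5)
Your proposal is correct and follows essentially the same argument as the paper: fix one edge as an anchor, note that every other edge meets it in exactly $\lambda$ vertices and hence contributes at most $\mu = k-\lambda$ new vertices, and sum. The only difference is that you handle the trivial cases $m\le 1$ explicitly, which the paper leaves implicit.
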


\begin{proof}%[Proof of \Cref{lem:edge-vertex}]
    Choose an edge $e\in E(\calH)$. Since $\calH$ is a $\lambda$-intersecting hypergraph, all other edges $e'$ intersect $e$ at exactly $\lambda$ vertices. Thus, each edge of $\calH\setminus\{e\}$ contributes at most $k-\lambda$ new vertices. Thus, we have $\ord(\calH) \leq |e| + (k - \lambda)(m-1) = \mu m + \lambda$ as desired.  
\end{proof}

The following lemma proved by McCarthy and Vanstone~\cite{McCarty-Vanstone} provides useful information regarding the degree of vertices in $\lambda$-intersecting $k$-graphs.

\begin{lemma}[McCarty and Vanstone~\cite{McCarty-Vanstone}]\label{lem:degree-condition}
    Let $\calH$ be a $\lambda$-intersecting $k$-graph with $m$ edges. Then, for any $v\in V(\calH)$,
    \[
    (d(v)-\mu)(\mu+\lambda m)-\lambda d(v)^2 \le 0.
    \]
\end{lemma}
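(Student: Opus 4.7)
The plan is to prove the inequality via a spectral argument on the edge--vertex incidence matrix of $\calH$. Let $M\in\{0,1\}^{E(\calH)\times V(\calH)}$ be this matrix (so $M_{e,u}=1$ iff $u\in e$). The $\lambda$-intersecting property combined with $k$-uniformity yields
\[
MM^{T} \;=\; \mu I_m + \lambda J_m,
\]
an $m\times m$ matrix with eigenvalue $\mu+\lambda m$ of multiplicity $1$ (eigenvector $\mathbf{1}_m$) and eigenvalue $\mu$ of multiplicity $m-1$. Since $MM^{T}$ and $M^{T}M$ share the same nonzero spectrum, $M^{T}M$ has eigenvalues $\mu+\lambda m$, $\mu$, and $0$ with multiplicities $1$, $m-1$, and $n-m$ respectively, and its unique top eigenvector is proportional to $M^{T}\mathbf{1}_m$, i.e.\ to the degree vector $\mathbf{d}=(d(u))_{u\in V(\calH)}$.

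Next I would compute $\|\mathbf{d}\|^{2} = \mathbf{1}_m^{T}MM^{T}\mathbf{1}_m = m(\mu+\lambda m)$, so that the normalised top eigenvector is $\mathbf{d}/\sqrt{m(\mu+\lambda m)}$. Writing $P_1$ for the rank-one projection onto this top eigenvector and $P_2$ for the projection onto the $\mu$-eigenspace, the eigen-decomposition of $M^{T}M$ gives
\[
d(v) \;=\; e_v^{T}M^{T}M\,e_v \;=\; (\mu+\lambda m)\,\|P_1 e_v\|^{2} + \mu\,\|P_2 e_v\|^{2} \;\le\; \mu + \lambda m\cdot \|P_1 e_v\|^{2},
\]
where the inequality uses $\|P_1 e_v\|^{2}+\|P_2 e_v\|^{2}\le \|e_v\|^{2}=1$ together with $\mu\ge 0$.

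To finish I would observe that $\|P_1 e_v\|^{2} = \langle e_v,\mathbf{d}\rangle^{2}/\|\mathbf{d}\|^{2} = d(v)^{2}/[m(\mu+\lambda m)]$, so the previous display rearranges to
\[
d(v)-\mu \;\le\; \frac{\lambda\, d(v)^{2}}{\mu+\lambda m}.
\]
Multiplying through by $\mu+\lambda m>0$ yields the desired $(d(v)-\mu)(\mu+\lambda m)-\lambda d(v)^{2}\le 0$. The only non-routine step is recognising that the entire $\lambda$-intersecting structure gets encoded into the rank-one perturbation $MM^{T}=\mu I+\lambda J$, and that pulling back its top eigenvector via $M^{T}$ pins down the principal eigenvector of $M^{T}M$ as the normalised degree sequence; once this is in place, the rest is routine linear algebra and rearrangement.
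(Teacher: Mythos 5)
Your proof is correct. Note that the paper itself does not prove this lemma --- it is imported from McCarthy and Vanstone without proof --- so there is no in-paper argument to compare against; your write-up supplies a complete, self-contained derivation. The chain of steps checks out: $MM^{T}=\mu I_m+\lambda J_m$ follows from $k$-uniformity and the $\lambda$-intersecting property, the nonzero spectra of $MM^{T}$ and $M^{T}M$ agree with multiplicity, $M^{T}\mathbf{1}_m$ is the degree vector and spans the top eigenspace, $e_v^{T}M^{T}Me_v=d(v)$, and the projection inequality $\|P_2e_v\|^2\le 1-\|P_1e_v\|^2$ together with $\mu\ge 0$ gives exactly $(d(v)-\mu)(\mu+\lambda m)\le\lambda d(v)^2$. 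Two implicit assumptions are worth flagging, though both are harmless in the paper's setting: you use $\mu=k-\lambda\ge 0$ (true, and strict in every application here) and $\mu+\lambda m>0$ when dividing and multiplying back (true unless the hypergraph is degenerate); also, simplicity of the top eigenvalue requires $\lambda m>0$, but when $\lambda=0$ the claimed inequality reduces to $d(v)\le\mu=k$ and your decomposition still yields it, so no case is lost. This is essentially the classical variance/eigenvalue argument behind Fisher-type inequalities, and it is a perfectly good route to the statement.
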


As a direct consequence of \Cref{lem:degree-condition}, one can deduce the following lemma.

\begin{lemma}\label{lem:degree-separation}
    Let $\calH$ be a $\lambda$-intersecting $k$-graph with $m \geq 20\mu$ edges. Then, for every $v\in V(\calH)$, we have either $d(v) \leq 1.1 \mu$ or $d(v) \geq m - 1.1 \mu$. %holds.
\end{lemma}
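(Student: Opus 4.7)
The plan is to treat the McCarty--Vanstone inequality from \Cref{lem:degree-condition} as a quadratic inequality in the degree $d=d(v)$ and show that the two roots are, respectively, slightly above $\mu$ and slightly below $m-\mu$. Since a $\lambda$-intersecting $k$-graph with $\lambda=0$ that has more than one edge is automatically a sunflower, we may assume $\lambda\ge 1$. Rewriting \Cref{lem:degree-condition} gives
\[
\lambda\, d^2 - (\mu+\lambda m)\,d + \mu(\mu+\lambda m) \ge 0,
\]
which is an upward-opening quadratic in $d$. Therefore $d(v)\le d_-$ or $d(v)\ge d_+$, where $d_-\le d_+$ are the two (real) roots.

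Next I would estimate $d_-$. Writing $T:=\mu+\lambda m$ and $x:=4\lambda\mu/T$, the quadratic formula together with multiplication by the conjugate yields the compact expression
\[
d_- \;=\; \frac{T-\sqrt{T(T-4\lambda\mu)}}{2\lambda} \;=\; \frac{2\mu}{1+\sqrt{1-x}}.
\]
The hypothesis $m\ge 20\mu$ and $\lambda\ge 1$ give $T\ge \lambda m \ge 20\lambda\mu$, so $x\le 1/5$. Consequently $\sqrt{1-x}\ge 2/\sqrt 5$, and hence
\[
d_- \;\le\; \frac{2\mu}{1+2/\sqrt 5} \;=\; \frac{2\sqrt 5\,\mu}{\sqrt 5 + 2} \;<\; 1.06\,\mu \;<\; 1.1\,\mu.
\]

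For $d_+$ I would avoid another square-root computation and use Vieta's formulas: $d_- + d_+ = T/\lambda = \mu/\lambda + m$. Since $\mu/\lambda\ge 0$ and $d_-<1.1\mu$, this immediately gives
\[
d_+ \;=\; m + \mu/\lambda - d_- \;\ge\; m - 1.1\,\mu.
\]
Combining the two estimates, every $v\in V(\calH)$ satisfies $d(v)\le 1.1\mu$ or $d(v)\ge m-1.1\mu$, as required.

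I do not foresee a genuine obstacle: the content is essentially a calibration of the McCarty--Vanstone bound under the extra hypothesis $m\ge 20\mu$. The only slightly delicate point is that the numerical constant is tight: the bound $x\le 1/5$ gives $d_-\lesssim 1.056\mu$, which only barely clears the $1.1\mu$ threshold, so one must not weaken $20\mu$ to something much smaller without also relaxing the constant $1.1$.
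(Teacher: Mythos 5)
Your proof is correct and follows essentially the same route as the paper: both read the McCarty--Vanstone inequality as a quadratic in $d$, show the two roots are real under $m\ge 20\mu$, bound the smaller root by about $1.06\mu$, and recover the larger root via Vieta's formulas --- the only difference being that you compute the smaller root in closed form while the paper simply checks the sign of the quadratic at $1.1\mu$. (One small remark: the lemma does not exclude sunflowers, so your stated reason for discarding $\lambda=0$ does not apply, but that case is harmless anyway since the inequality then directly gives $d(v)\le\mu$, and the paper's own proof likewise tacitly assumes $\lambda\ge 1$.)
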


\begin{proof}
Let $f(d)=(d-\mu)(\mu+\lambda m)-\lambda d^2$. From \Cref{lem:degree-condition}, we know $f(d(v))\le 0$.
On the other hand, since $m\ge 20\mu$,
\[
f(1.1\mu )=0.1\mu (\mu+\lambda m)-\lambda (1.1\mu)^2 \ge 0.1\mu \cdot 20\mu \lambda-\lambda (1.1\mu)^2>0.
\]
The discriminant of $f$ is 
\[
\Delta=(\mu+\lambda m)^2-4\lambda \mu (\mu +\lambda m)=(\mu +\lambda m)\left(\mu+\lambda (m-4\mu)\right).
\]
For $m\ge 20\mu$, we have $\Delta>0$, implying that the quadratic equation $f(d)=0$ has two distinct roots, $d_1$ and $d_2$, with $d_1<d_2$. By Vieta's formulas, $d_1+d_2=m+\frac{\mu}{\lambda}\ge m$.
For $m\ge 20\mu$, we have $\frac12 (d_1+d_2) \ge \frac{m}{2}>1.1\mu$.
Thus, since $f(0)<0<f(1.1 \mu)$,  $d_1\le 1.1\mu$ and $d_2\ge m-d_1\ge m-1.1\mu$. Since $f(d(v))\le 0$ and the coefficient of $d^2$ in $f(d)$ is negative, we must have either $d(v)\le d_1 \le 1.1\mu$ or $d(v)\ge d_2\ge m-1.1\mu$.
\end{proof}

From \Cref{lem:degree-separation}, every vertex of a $\lambda$-intersecting $k$-graph with many edges has either a high degree or a low degree. We call vertices with such high degree \emph{heavy}.

\begin{definition}
    Let $\calH$ be a $\lambda$-intersecting $k$-graph with $m \geq 20\mu$ edges. We say a vertex $v\in V(\calH)$ is \emph{heavy} if $d(v) \geq m - 1.1 \mu$.
\end{definition}

The following lemma will be frequently used in the proof of \Cref{thm:deza-order}. It says that if a $\lambda$-intersecting $k$-graph has a few numbers of heavy vertices, then the number of edges cannot be large. 

\begin{lemma}\label{lem:heavy-edge}
    Let $\calH$ be a $\lambda$-intersecting $k$-graph with $m \geq 20\mu$ edges that is not a sunflower. If $\calH$ has less than $\lambda$ heavy vertices, then $m \leq 2\mu^2$. Otherwise, $m \leq 3 \lambda \mu$.
\end{lemma}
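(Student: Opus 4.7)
The plan is to combine the edge-degree identity for $\lambda$-intersecting $k$-graphs with \Cref{lem:degree-separation}. For any edge $e$, double-counting incidences with the other $m-1$ edges gives
\[
\sum_{v\in e} d(v) \;=\; k + (m-1)\lambda \;=\; \lambda m + \mu.
\]
Together with the dichotomy from \Cref{lem:degree-separation}---every vertex is either \emph{heavy} (degree $\ge m-1.1\mu$) or has degree at most $1.1\mu$---this single identity will drive both cases. Throughout I write $S$ for the set of heavy vertices, $H=|S|$, and $h_e=|e\cap S|$.

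For the case $H<\lambda$, every edge satisfies $h_e\le\lambda-1$, so upper-bounding heavy vertices by $m$ and non-heavy vertices by $1.1\mu$ in the identity yields
\[
\lambda m + \mu \;\le\; (\lambda-1)m + (\mu+1)\cdot 1.1\mu,
\]
which rearranges to $m\le 1.1\mu^2+0.1\mu\le 2\mu^2$.

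For the case $H\ge\lambda$, let $E^\ast\subseteq E(\calH)$ be the set of edges containing $S$. I split on whether $|E^\ast|\ge 2$. If $|E^\ast|\ge 2$, then any two such edges intersect in a set containing $S$ of size exactly $\lambda$, forcing $H=\lambda$ and showing $E^\ast$ is a sunflower with core $S$. Since $\calH$ is not a sunflower, some $f\in E(\calH)\setminus E^\ast$ exists. Writing $a=|f\cap S|$, the case $a=\lambda$ would give $f\in E^\ast$, so $a\le\lambda-1$; then $f$ meets each petal $e\setminus S$ (for $e\in E^\ast$) in exactly $\lambda-a$ vertices, and since these petals are pairwise disjoint of size $\mu$ while $|f\setminus S|=k-a$, we get $|E^\ast|(\lambda-a)\le k-a$. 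The ratio $(k-a)/(\lambda-a)$ is increasing in $a$, so $a=\lambda-1$ gives $|E^\ast|\le\mu+1$. A union bound over $S$ shows $|E(\calH)\setminus E^\ast|\le\sum_{v\in S}(m-d(v))\le 1.1\lambda\mu$, and therefore $m\le(\mu+1)+1.1\lambda\mu\le 3\lambda\mu$.

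If instead $|E^\ast|\le 1$, then $m-1\le\sum_{v\in S}(m-d(v))\le 1.1H\mu$, so it suffices to bound $H$. Applying the identity with the lower bound $\sum_{v\in e}d(v)\ge h_e(m-1.1\mu)+(k-h_e)$ gives
\[
h_e \;\le\; \frac{\lambda(m-1)}{m-1.1\mu-1},
\]
which, thanks to $m\ge 20\mu$, is at most $(1+\varepsilon)\lambda$ for a small explicit $\varepsilon$. Summing over all edges and comparing with $\sum_e h_e=\sum_{v\in S}d(v)\ge H(m-1.1\mu)$ yields $H\le(1+\varepsilon')\lambda$, whence $m\le 1+1.1(1+\varepsilon')\lambda\mu\le 3\lambda\mu$. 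The main technical point is that the $1.1\mu$ slack from the degree dichotomy propagates through both subcases, and the hypothesis $m\ge 20\mu$ is exactly what lets these slacks collapse so that the final constants come out to the stated $2\mu^2$ and $3\lambda\mu$.
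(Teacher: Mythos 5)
Your proof is correct, and in the first case it is essentially the paper's argument in disguise: the identity $\sum_{v\in e}d(v)=\lambda m+\mu$ applied to a single edge, with heavy degrees bounded by $m$ and light degrees by $1.1\mu$, is exactly the double count the paper performs by noting that each edge meets $e$ in at least $\lambda-t'$ light vertices. (One small point worth making explicit: $h_e\, m+(k-h_e)\cdot 1.1\mu$ is increasing in $h_e$ because $m>1.1\mu$, which is what justifies substituting $h_e=\lambda-1$ and $k-h_e=\mu+1$.) In the case $H\ge\lambda$ you genuinely diverge: you take the core to be the \emph{entire} heavy set $S$, which forces the extra subcase $\lvert E^\ast\rvert\le 1$ and the auxiliary estimate $H\le(1+\varepsilon)\lambda$ obtained from the same identity. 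The paper sidesteps all of this by fixing only $\lambda$ heavy vertices $v_1,\dots,v_\lambda$ and splitting edges by whether they contain all of them: the containing part is automatically a sunflower with core $\{v_1,\dots,v_\lambda\}$, a non-member edge bounds its size by $k$ via the disjoint-petal argument (your $\lvert E^\ast\rvert\le\mu+1$ is the sharper form of the same step), and the union bound $1.1\lambda\mu$ handles the rest. So your decomposition costs an extra subcase without gaining anything, but every step checks out and the constants close under the hypothesis $m\ge 20\mu$, so the lemma is fully proved.
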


\begin{proof}%[Proof of \Cref{lem:heavy-edge}]
    Assume $\calH$ has $t$ heavy vertices. We first consider the case that $t < \lambda$. Fix an edge $e\in E(\calH)$ containing $t'\le t$ heavy vertices. Note that every edge $e' \in E(\calH)$ intersects at least $\lambda - t'$ non-heavy vertices of $e$. Thus, writing $L$ for the set of non-heavy vertices, we have
    $$m(\lambda -t')\leq \sum_{e'\in E(\calH)} |e\cap e'\cap L| \leq \sum_{v\in e\cap L} d(v) \leq 1.1\mu (k-t'),$$
    which implies that (using $m \ge 20 \mu$)
    $$m \leq \frac{1.1\mu (k - t')}{\lambda - t'} \leq 1.1\mu(k-\lambda + 1) \leq 2 \mu^2.$$    
    Here, the penultimate inequality holds as $t'< \lambda$. 
    We now consider the case $t \geq \lambda$. Let $v_1, \dots, v_{\lambda}$ be heavy vertices. Consider a partition $\calH = \calH_1 \cup \calH_2$ such that $E(\calH_1)$ is the set of edges of $\calH$ that contain all the vertices $v_1, \cdots, v_{\lambda}$ and $E(\calH_2) = E(\calH) \setminus E(\calH_1)$.
    For any $i\in[\lambda]$, as $v_i$ is heavy, the number of edges in $\calH_2$ missing  $v_i$ is at most $1.1\mu$. Thus, $|E(\calH_2)| \leq 1.1\lambda \mu$.
    We note that $\calH$ is a $\lambda$-intersecting hypergraph and so the $k$-graph $\calH_1$ is a sunflower. As we assumed that $\calH$ is not a sunflower, there exists at least one edge $e \in E(\calH_2)$. As $\calH$ is $\lambda$-intersecting, $e$ must intersect at least one vertex of each $e'\in E(\calH_1)$ outside of $\{v_1, \dots, v_{\lambda}\}$. This implies $|E(\calH_1)| \leq k$. Thus we have $m = |E(\calH_1)| + |E(\calH_2)| \leq 1.1 \lambda \mu + k \leq 3\lambda \mu. $
\end{proof}

%----------------------------------------------------------------

\subsection{Proof of \Cref{thm:deza-order}}
We are now ready to prove \Cref{thm:deza-order}. For convenience, we denote 
$$
    f(k, \lambda) := \frac{4}{27}(k-\lambda)^3 + 4\lambda(k-\lambda)^{2} + 100(k-\lambda)^{5/2} + 1000\lambda(k-\lambda) + 30000(k-\lambda).
$$

\begin{proof}[Proof of \Cref{thm:deza-order}]
    We may assume $\lambda > 1$ as the case $\lambda = 1$ is covered by \Cref{thm:1-intersecting}.
    Let $\calH$ be a $\lambda$-intersecting $k$-graph with $m$ edges. If $m \leq 20 \mu$, then by \Cref{lem:edge-vertex}, we have $$\ord(\calH) \leq 20\mu^2 + \lambda \leq f(k, \lambda).$$ Thus, we may assume that $m \geq 20 \mu$.

    \begin{claim}\label{clm:many-heavy-vtx}
        If $\calH$ has at least $\lambda - 1$ heavy vertices, then $\ord(\calH) \leq f(k, \lambda)$.
    \end{claim}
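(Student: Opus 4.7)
The plan is to split on the number of heavy vertices $t$. The easy case is $t\ge \lambda$: \Cref{lem:heavy-edge} immediately gives $m\le 3\lambda\mu$, so \Cref{lem:edge-vertex} yields $\ord(\calH)\le \mu m+\lambda\le 3\lambda\mu^2+\lambda$, well within $f(k,\lambda)$.

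The substantive case is $t=\lambda-1$. Let $H=\{v_1,\dots,v_{\lambda-1}\}$ be the heavy vertices and partition the edges into $\calH_1$ (those containing all of $H$) and $\calH_2$. Each heavy vertex misses at most $1.1\mu$ edges, so $|\calH_2|\le 1.1(\lambda-1)\mu$, and since $\calH_2$ is still $\lambda$-intersecting, \Cref{lem:edge-vertex} gives $\ord(\calH_2)\le \mu|\calH_2|+\lambda\le 1.1\lambda\mu^2+\lambda$. The key point is that $\calH_1^*:=\{e\setminus H:e\in \calH_1\}$ is a $1$-intersecting $(\mu+1)$-graph, since any pair of edges in $\calH_1$ meets in exactly $\lambda$ vertices and already shares $H$ of size $\lambda-1$. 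Assuming $\calH_1^*$ is not a sunflower, \Cref{thm:1-intersecting} gives
\[
    \ord(\calH_1^*)\le \tfrac{4}{27}(\mu+1)^3+4(\mu+1)^{5/2},
\]
and combining with the contributions of $H$ and $\calH_2$ via $\ord(\calH)\le (\lambda-1)+\ord(\calH_1^*)+\ord(\calH_2)$ stays inside $f(k,\lambda)$: the leading $\tfrac{4}{27}\mu^3$ matches, the $O(\mu^2)$ correction from expanding $(\mu+1)^3$ is absorbed into $4\lambda\mu^2$ (using $\lambda\ge 2$), and the remaining $O(\mu^{5/2})$ and linear terms comfortably fit under $100\mu^{5/2}+1000\lambda\mu+30000\mu$.

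The main obstacle is the residual possibility that $\calH_1^*$ is itself a sunflower, so that \Cref{thm:1-intersecting} is unavailable. If $|\calH_1^*|\le 2$ the total is trivially small. Otherwise the $1$-intersecting property forces the sunflower core of $\calH_1^*$ to have size exactly one, say $\{v\}$. Then $v$ lies in every edge of $\calH_1$, so $d_\calH(v)\ge |\calH_1|$; but $v\notin H$, and by the case hypothesis $H$ is the entire set of heavy vertices, so \Cref{lem:degree-separation} forces $d(v)\le 1.1\mu$. Hence $|\calH_1|\le 1.1\mu$, giving $m\le 1.1\lambda\mu$, and \Cref{lem:edge-vertex} once more yields $\ord(\calH)\le 1.1\lambda\mu^2+\lambda\le f(k,\lambda)$, completing the proof in all cases.
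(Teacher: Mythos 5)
Your proof is correct and follows the paper's argument almost exactly: the same split into the cases $t \ge \lambda$ and $t = \lambda-1$, the same partition into $\calH_1$ and $\calH_2$ with $|E(\calH_2)| \le 1.1(\lambda-1)\mu$, and the same reduction of $\calH_1$ to a $1$-intersecting $(\mu+1)$-graph handled by \Cref{thm:1-intersecting}. The only divergence is the subcase where the reduced family $\calH_1^*$ is a sunflower: the paper bounds $|E(\calH_1)| \le k$ by arguing that each edge of $\calH_1$ must meet an edge of $\calH_2$ outside the sunflower core, whereas you observe that the (single) core vertex lies outside the set of heavy vertices and therefore has degree at most $1.1\mu$ by \Cref{lem:degree-separation}, giving the even stronger bound $|E(\calH_1)| \le 1.1\mu$; both arguments are valid and yield $m = O(\lambda\mu)$, after which \Cref{lem:edge-vertex} closes the case as in the paper.
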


    \begin{claimproof}
        Let $\{v_1, \dots, v_t\}$ be the set of heavy vertices of $\calH$. We separately consider two cases, $t = \lambda - 1$ and $t \geq \lambda$.
        
        Assume $t = \lambda - 1$. Let $\calH = \calH_1 \cup \calH_2$ be the partition of $\calH$ such that $E(\calH_1) = \{e\in E(\calH): \{v_1, \dots, v_t\} \subset e\}$ and $E(\calH_2) = E(\calH) \setminus E(\calH_1)$.
        As each edge in $\calH_2$ misses at least one heavy vertex, we have $|E(\calH_2)| \leq 1.1\mu(\lambda - 1)$. By \Cref{lem:edge-vertex}, we see that
        \begin{equation}\label{eq:H2}
            \ord(\calH_2) \leq 1.1\mu^2 \lambda+\lambda.
        \end{equation}
        As $\calH$ is not a sunflower, if $\calH_1$ is a sunflower, then $E(\calH_2) \neq \varnothing$. This means all edges of $\calH_1$ must intersect all edges of $\calH_2$ outside of its kernel, so $|E(\calH_1)| \leq k$. Hence, in this case, $m \leq 1.1\mu(\lambda - 1) + k \leq 3\mu \lambda$. By \Cref{lem:edge-vertex} again, we have $$\ord(\calH) \leq 4\mu^2 \lambda \leq f(k, \lambda).$$
        
        Thus, we now assume that $\calH_1$ is not a sunflower. Let $\calH_1'$ be the hypergraph which is obtained from $\calH_1$ by removing $\{v_1, \dots, v_t\}$ from all edges. Then $\calH'_1$ is a $1$-intersecting $(k - \lambda + 1)$-graph that is not a sunflower. Thus, \Cref{thm:1-intersecting} and~\eqref{eq:H2} imply that $$\ord(\calH) \leq \ord(\calH_1) + \ord(\calH_2) = \lambda - 1 + \ord(\calH_1') + \ord(\calH_2) \leq f(k, \lambda).$$ 
        
        We now deal with the case that $\calH$ has $t \geq \lambda$ heavy vertices. Then \Cref{lem:edge-vertex,lem:heavy-edge} %implies 
        imply $$\ord(\calH) \leq \mu m + \lambda \leq 3\lambda \mu^2 + \lambda \leq f(k, \lambda).$$
        This proves the claim. 
    \end{claimproof}

    In the light of \Cref{clm:many-heavy-vtx}, we may assume $\calH$ has at most $\lambda - 2$ heavy vertices. Consider the following set of vertices.
 $$A = \{v\in V(\calH): d(v) \geq \sqrt{\mu}\}.$$
Then the following claim holds.
    \begin{claim}\label{clm:intersection-with-A}
        If there is an edge $e\in \calH$ such that $|e \cap A| < \lambda$, then $m \leq 2 \mu^{3/2}$.
    \end{claim}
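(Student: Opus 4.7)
The plan is to fix such an edge $e$ with $t := |e \cap A| \le \lambda - 1$ and double count incidences between $e \setminus A$ and the other edges. The key observation is that the $\lambda$-intersecting property forces every edge $e' \neq e$ to share exactly $\lambda$ vertices with $e$, but at most $t$ of these can lie in $e \cap A$, so at least $\lambda - t$ of them must lie in $e \setminus A$. Since every vertex in $e \setminus A$ has degree strictly less than $\sqrt{\mu}$, summing over $e' \in E(\calH) \setminus \{e\}$ yields
\[
(m - 1)(\lambda - t) \;\le\; \sum_{e' \neq e} |e' \cap (e \setminus A)| \;=\; \sum_{v \in e \setminus A}(d(v) - 1) \;<\; (k - t)\sqrt{\mu}.
\]

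Next I would optimize the worst-case ratio in $t$. Writing $\frac{k - t}{\lambda - t} = 1 + \frac{\mu}{\lambda - t}$, this is a nondecreasing function of $t$ on $\{0, 1, \dots, \lambda - 1\}$, and attains its maximum $\mu + 1$ precisely at $t = \lambda - 1$. Therefore
\[
m \;\le\; 1 + (\mu + 1)\sqrt{\mu} \;=\; 1 + \mu^{3/2} + \sqrt{\mu}.
\]

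Finally, I would absorb the lower-order terms. For $\mu \ge 2$ the inequality $1 + \sqrt{\mu} \le \mu^{3/2}$ holds, giving $m \le 2\mu^{3/2}$ directly. The edge case $\mu = 1$ is vacuous here, since the standing assumption of the section is $m \ge 20\mu$, which contradicts $m \le 1 + \mu^{3/2} + \sqrt{\mu} = 3$. This completes the bound $m \le 2\mu^{3/2}$.

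There is no real obstacle in this claim beyond bookkeeping; the only mildly delicate point is noticing that the worst $t$ is $\lambda - 1$ (rather than, say, $t = 0$) and confirming that the edge case of very small $\mu$ is ruled out by the background assumption $m \ge 20\mu$ rather than requiring a separate argument.
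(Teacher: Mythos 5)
Your proposal is correct and follows essentially the same double-counting argument as the paper: every other edge must meet $e\setminus A$ in at least $\lambda-|e\cap A|$ low-degree vertices, and summing degrees over $e\setminus A$ gives the bound (the paper counts $e$ itself on both sides, writing $m(\lambda-t)\le\sum_{v\in e\setminus A}d(v)\le\sqrt{\mu}(k-t)$ and then bounding $\tfrac{k-t}{\lambda-t}\le\mu+1$, which avoids your separate treatment of the $\mu=1$ case). The differences are purely bookkeeping.
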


    \begin{claimproof}
        Assume there is an edge $e\in E(\calH)$ such that $|e\cap A| < \lambda$. Then every other edge $e'$ of $\calH$ intersects at least $\lambda - |e\cap A|$ vertices of $e \setminus A$, all of which have degree less than $\sqrt{\mu}$. Thus, we have 
        $m(\lambda -|e\cap A|)\leq \sum_{v\in e\setminus A} d(v) \leq \sqrt{\mu}(k-|e\cap A|),$
        implying  
        $$m \leq \sqrt{\mu} \left( \frac{k - |e\cap A|}{\lambda - |e\cap A|} \right) \leq \sqrt{\mu}(k - \lambda + 1) \leq 2\mu^{3/2}.$$
        This proves the claim.
    \end{claimproof}

    If there is an edge $e\in E(\calH)$ such that $|e\cap A| < \lambda$, then by \Cref{clm:intersection-with-A} and \Cref{lem:edge-vertex}, we have
    $$\ord(\calH) \leq 2\mu^{5/2} + \lambda \leq f(k, \lambda).$$
    Thus, we may assume that for every $e\in E(\calH)$, the size of the intersection $|e\cap A|$ is at least $\lambda$. 

    Now, we consider the (multi-)hypergraph $\calA$ with $V(\calA) = A$ and $E(\calA) = \{e\cap A: e\in E(\calH)\}$. Here, we consider $E(\calA)$ as a multiset.
   As have $\sqrt{\mu} |A| \leq \sum_{v\in A} d(v) \leq k m$ and $m \leq 2 \mu^2$ by \Cref{lem:heavy-edge}, we have
    $$
        |A| \leq 2k\mu^{3/2}.
    $$

    Now, we are ready to finish the proof.
    Let $f\in E(\calA)$ be an edge that has the smallest size, say $\ell$, among all the edges of $E(\calA)$. We partition $\calH$ into $\calH_1 \cup \calH_2$ where $E(\calH_1) = \{e\in E(\calH): |e\cap f| = \lambda \}$ and $E(\calH_2) = E(\calH) \setminus E(\calH_1)$.
    Let $e'$ be an edge of $\calH$ such that $e'\cap A = f$. As $\calH$ is $\lambda$-intersecting, all edges of $\calH_2$ contain at least one vertex of $e' \setminus f = e'\setminus A$. Since $|f| \geq \lambda$, we have $|E(\calH_2)| \le |e'\setminus A|\sqrt{\mu}\leq (k - \lambda)\sqrt{\mu} = \mu^{3/2}$. By \Cref{lem:edge-vertex}, we obtain $$\ord(\calH_2) \leq \mu^{5/2} + \lambda.$$

    Assume $f$ contains $t$ heavy vertices of $\calH$.
    Since $\calH$ has at most $\lambda - 2$ heavy vertices, we have $t \leq \lambda - 2$. By the definition of $\calH_1$, all edges of $\calH_1$ contain at least $\lambda - t$ non-heavy vertices of $f$. Hence, recalling that $L$ is the set of non-heavy vertices, we have 
    $ (\lambda-t)|E(\calH_1)| \leq \sum_{v\in f\cap L} d(v) \leq 1.1\mu (\ell-t)$. Thus, 
    $$|E(\calH_1)| \leq \frac{1.1\mu (\ell - t)}{\lambda - t}\le \frac{1.1\mu (\ell - \lambda + 2)}{2}.$$ 
    As $\ell$ is chosen to be the minimum $|e\cap A|$ over all edges $e$, we obtain
    $$\ord(\calH_1) \leq |A| + (k - \ell)|E(\calH_1)| \leq  2k\mu^{3/2} +  \frac{1.1 \mu (\ell - \lambda + 2)(k - \ell)}{2} \leq 2k \mu^{3/2} + \frac{1.1}{8}\mu (\mu+2)^2,$$
    where the last inequality follows from the AM-GM inequality. Consequently, we have 
    $$\ord(\calH) \leq \ord(\calH_1) + \ord(\calH_2) \leq 2k\mu^{3/2} +  \frac{1.1}{8}\mu (\mu + 2)^2 + \mu^{5/2} + \lambda \leq f(k, \lambda).$$
    This completes the proof.
\end{proof}

%----------------------------------------------------------------
\section{1-cross-intersecting set pair systems}\label{sec:SPSs}

In this section, we prove \Cref{thm:order-sps}.

\begin{proof}[Proof of \Cref{thm:order-sps}]
Note that the collection $\mathcal{A}$ is a graph. 
In the proof of~\cite{FGK23} it is proved that $\A$ forms either an odd cycle $C_{2k+1}$ or a union of trees.
    In the first case, every set $B_i$ intersects the cycle in $k$ vertices, and hence, at most $n-k$ vertices lie outside the vertex set of $\A$.
    This implies that the number of non-isolated vertices in the SPS is $(2k+1)(n-k+2) \le 2\floorfrac{(n+1)(n+2)}4.$
    
    In the second case, $\A$ is a union of trees.
    We first prove the following claim which deals with the case of a single tree.
    One can note that~\cite[Lemma 3.2]{FGK23} is a corollary of this claim.
    
    \begin{claim}\label{clm:sumVi}
        Let $T$ be a tree with $t$ edges $e_1, e_2, \ldots, e_t.$ 
        For every edge $e_i$ ( $1 \le i \le t$), let 
        $V_i$ be the set of vertices at odd distances from $e_i.$
        Then $\sum_{i=1}^t \abs{V_i} \ge \floorfrac{t^2}{2}.$
    \end{claim}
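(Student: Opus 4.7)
The plan is to switch the order of summation and reduce the claim to an elementary convex-optimisation inequality in the bipartition sizes of $T$. I write
\[
\sum_{i=1}^t \abs{V_i} = \sum_{w \in V(T)} o(w), \qquad \text{where } o(w) \defeq \abs{\{i : d(w, e_i) \text{ is odd}\}},
\]
so it suffices to understand $o(w)$ for each vertex $w$. The key identity I will aim for is $o(w) = \abs{X_w} - 1$, where $(X, Y)$ denotes the (unique) bipartition of $T$ and $X_w$ is the part containing $w$.

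To establish this identity, I would root $T$ at $w$. Every edge $e_i$ then has a unique endpoint $u$ closer to $w$ and a unique endpoint $v$ with $d(w, v) = d(w, u) + 1$, so $d(w, e_i) = d(w, u)$. Hence $d(w, e_i)$ is odd if and only if $d(w, v)$ is positive and even, which, by the bipartiteness of $T$, is equivalent to $v \in X_w \setminus \{w\}$. Since the map $e_i \mapsto v$ is a bijection between $E(T)$ and $V(T) \setminus \{w\}$, it follows that $o(w) = \abs{X_w} - 1$ as claimed.

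Summing this identity over $w$ yields
\[
\sum_{i=1}^t \abs{V_i} = \abs{X}(\abs{X}-1) + \abs{Y}(\abs{Y}-1),
\]
and since $\abs{X} + \abs{Y} = t + 1$, convexity implies that the right-hand side is minimised when $\abs{X}$ and $\abs{Y}$ differ by at most one; a short direct calculation then shows this minimum is exactly $\floorfrac{t^2}{2}$. I do not expect any serious obstacle: the whole argument hinges on the short bijective identity $o(w) = \abs{X_w} - 1$, after which the claim reduces to an elementary optimisation over two nonnegative integers summing to $t + 1$.
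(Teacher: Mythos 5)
Your proof is correct. It establishes exactly the same key identity as the paper --- $\sum_{i=1}^t \abs{V_i} = \abs{X}(\abs{X}-1)+\abs{Y}(\abs{Y}-1)$ for the bipartition $(X,Y)$ of $T$, followed by the same optimisation over $\abs{X}+\abs{Y}=t+1$ --- but reaches it by a different route. The paper proceeds by induction on $t$, deleting a pendant edge and tracking how the sum changes; inside that inductive step it uses the bijection matching each edge to its endpoint farther from the new leaf. You instead exchange the order of summation and apply that same edge-to-far-endpoint bijection at \emph{every} vertex $w$ simultaneously, obtaining $o(w)=\abs{X_w}-1$ in one stroke. This eliminates the induction entirely and turns the argument into a single double count, which is arguably cleaner; the paper's version has the mild advantage of exhibiting explicitly how the quantity evolves as the tree grows, which is in the spirit of how Claim~\ref{clm:sumVi} is later used when trees are modified. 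Your justification of the identity is complete: the observation that the two endpoints of any edge lie at distances differing by exactly one from $w$ (so $d(w,e_i)=d(w,u)$ for the closer endpoint $u$) correctly handles all cases, including $w\in e_i$, where the distance is $0$ and the edge is rightly not counted.
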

    \begin{claimproof}
        We will prove by induction that for a tree whose bipartition classes, which we call black and white, have sizes $b$ and $w$, the sum equals $b(b-1)+w(w-1).$
        This is trivially true for a single edge, i.e. for size $t=1.$
        Suppose it is proven for all values up to $t-1\ge 1$ and we have a tree of size $t.$
        Pick a pending edge $e$, which we call $e_t$, and assume its end-vertex $v$ (the leaf) is black.
        Then the considered sum for $T \backslash e$ equals $(b-2)(b-1)+w(w-1)$ by the induction hypothesis.
        The vertices at odd distances from $e$ in $T$ are exactly the $b-1$ black vertices in $T \backslash e$. 
        Considering $T$ as a tree rooted at $v$, every edge can be matched with the end-vertex further from $v$.
        For every edge $e_i$ matched with a black vertex, $v$ is at odd distance from $e_i$ and hence has to belong to the vertex set $V_i$.
        Hence $b-1$ vertex sets $V_i$ are extended with one vertex and the additional vertex set $V_t$ has size $b-1.$
        This implies that $\sum_{i=1}^t \abs{V_i}=(b-2)(b-1)+w(w-1)+2(b-1)=b(b-1)+w(w-1),$ finishing the inductive step.
        
        To finish, we need to observe that under the constraint $b, w \in \mathbb N$ and $b+w=t+1$, the minimum of $b(b-1)+w(w-1)$ is $\floorfrac{t^2}{2},$ which is immediate since the minimum is attained when $\{b,w\} = \left\{ \floorfrac{t+1}{2}, \ceilfrac {t+1}{2} \right \}.$
    \end{claimproof}
    
    Now, suppose $\A$ consists of $k$ trees
    $T_i$, $i\in[k]$, with bipartition sizes $w_i \ge b_i$ and size $t_i=w_i+b_i-1.$
    Note that the edges of $\A$ span $\sum_{i=1}^k t_i+k$ vertices.
    Furthermore, the vertices not covered by any edges of $\A$ are irrelevant to the $1$-cross-intersecting property, thus we may assume that all those vertices have degree $1$ in our SPS without decreasing the order of the given SPS.     
    We next argue that we can reduce the problem to the case when all trees are stars.

    \begin{claim}\label{clm:all-star}
        We may assume that each tree $T_i$ in $\A$, $i\in[k]$, is a star.
    \end{claim}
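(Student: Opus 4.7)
My plan is to prove the claim by a local modification argument: given any non-star tree $T^{(s)}$ in $\A$, replace it by a star on the same number of edges (and adjust the corresponding $\B$-sets accordingly), then verify that the order does not decrease. Iterating over all non-star trees reduces to the all-star case.

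First, I would express the order cleanly in terms of the structure of $\A$. The assumption that vertices outside $V(\A)$ have degree $1$ in the SPS, together with the bound $|B_i|\le n$, gives order $=|V(\A)|+\sum_i(n-|B_i\cap V(\A)|)$, which is maximized by minimizing $\sum_i|B_i\cap V(\A)|$. The $1$-cross-intersecting property forces $B_e$, for an edge $e$ in the tree $T^{(s)}$, to contain the vertex set $V_e$ of $T^{(s)}$ at odd distance from $e$ (forced), plus, for each other tree $T^{(s')}$, one of the two bipartition classes of $T^{(s')}$ (our choice, since each tree is bipartite and $B_e$ must meet every edge in exactly one vertex). Taking the smaller class of size $b_{s'}$ in each other tree yields the optimum $|B_e\cap V(\A)|=|V_e|+\sum_{s'\ne s}b_{s'}$.

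Next, I would compute the change when a tree $T^{(s)}$ with bipartition $(b,w)$, $b\ge 2$, is replaced by the star with bipartition $(1,t_s)$ on the same $t_s=b+w-1$ edges. By \Cref{clm:sumVi}, $\sum_{e\in T^{(s)}}|V_e|$ increases from $b(b-1)+w(w-1)$ to $t_s(t_s-1)$, that is, by exactly $2(b-1)(w-1)$. At the same time, for each of the $m-t_s$ edges $e'$ lying in other trees, the smaller class of $T^{(s)}$ chosen inside $B_{e'}$ shrinks from size $b$ to size $1$, producing a saving of $(b-1)(m-t_s)$. Therefore the net change in $\sum_i|B_i\cap V(\A)|$ is $(b-1)\bigl(2(w-1)-(m-t_s)\bigr)=(b-1)(3w+b-3-m)$, which is non-positive as soon as $m\ge 3w+b-3$.

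The hard part will be the complementary case $m<3w+b-3$, in which $T^{(s)}$ dominates $\A$ (so $t_s>(m-2)/3$, and at most two trees can be dominant in this sense). Here the simple swap can actually decrease the order, so a more delicate argument is needed. I expect one of two closing moves to work: either a finer local surgery, e.g.\ splitting $T^{(s)}$ at a high-degree vertex into several smaller trees each of which can then be star-ified, or a direct bound showing that the order is only $O(n^2)$ in the dominant regime --- comfortably below the target $(n+3)^3/27$, so that the overall theorem holds even in configurations where the reduction to stars cannot be carried out. Organizing these alternatives so that the reduction is clean and the iteration terminates is the main technical difficulty.
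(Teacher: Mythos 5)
Your accounting is set up correctly and your computation of the single-star swap is accurate: replacing a tree with bipartition $(b,w)$ by one star on the same $t_s=b+w-1$ edges raises $\sum_{e}|V_e|$ by $2(b-1)(w-1)$ while saving $(b-1)(m-t_s)$ on the $B$-sets of other trees, so the swap only helps when $m\ge 3w+b-3$. But the complementary ``dominant tree'' case is a genuine gap, not a loose end: you explicitly leave it open, and neither of your proposed closing moves is carried out. The fallback bound is not obviously sufficient --- in the dominant regime one still has to show the order is well below $(n+3)^3/27+(n+3)/3$ for \emph{all} $n\ge 4$, and a crude estimate of the form $O(n^2)$ (even granting it, which requires bounding $t_s$ and hence $m$ in terms of $n$) only beats the target once $n$ is fairly large. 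The ``finer surgery'' alternative is the right instinct but is exactly the part that needs to be made precise, and you do not specify the split, the new $B$-sets, or why the iteration terminates.

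The paper avoids the case split entirely with a different local move: a non-star component $T_i$ is replaced by a \emph{union of two balanced stars} of sizes $\floorfrac{t_i}{2}$ and $\bceil{t_i/2}$, rather than by a single star. By Claim~\ref{clm:sumVi} the quantity $\sum_e|V_e|$ for a tree on $t_i$ edges is minimized, up to an additive $1$, by this balanced two-star configuration (value $t_i'(t_i'-1)+t_i''(t_i''-1)$ versus the lower bound $\floorfrac{t_i^2}{2}$), so the within-component cost essentially cannot increase; the only price is one extra center vertex per $B_e$ with $e$ in the split component, and this loss of at most $1$ in covered outside vertices is offset by the one extra vertex gained inside $\A$. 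No compensation from other trees is needed, so the argument is uniform in $m$ and the number of non-star components strictly decreases at each step. If you want to salvage your single-star approach, you would need to supply a complete argument for the regime $m<3w+b-3$; as written, the proof is incomplete precisely where you say the main difficulty lies.
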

    \begin{claimproof}
       Consider an edge $e\in T_i$ and let $B_e$ be the corresponding $n$-set in $\B$. Recall that $B_e$ is disjoint from $e$ and intersects all other edges of $\A$ precisely once. 
       Note that there are precisely two subsets of $V(T_j)$ which intersect every edge of a tree in exactly one vertex: the two bipartition classes. Thus, $B$ contains at least $\sum_{j\neq i} b_j$ vertices from $\A\setminus T_i$.

       It is crucial to observe that a set which intersects all edges from $T_i$ in exactly one vertex, except for $e$, is precisely equal to the vertex set of vertices at odd distances from $e$. It then follows from Claim~\ref{clm:sumVi} that $\sum_{e\in E(T_i)} |B_e\cap V(T_i)| \geq \floorfrac{t^2}{2}$. Therefore, the $n$-sets $\{B_e\}_{e\in T_i}$ cover at most
    \begin{equation}\label{eq:A}
        t_i\Big(n-\sum_{j\neq i} b_j\Big)-\floorfrac{t_i^2}{2}
    \end{equation} 
    vertices outside of $\A.$ 
    
    If $T_i$ is a non-star component (i.e. diameter at least $3$), then it has size $t_i\ge 3$ and $b_i\ge 2$. Replace $T_i$ by a union of two stars $T_i',T_i''$ of size $t'_i=\floorfrac {t_i}{2}$ and $t''_i=\ceilfrac {t_i}{2}$ respectively. Let $\phi$ be a bijection from the edges of $T'_i\cup T_i''$  to the edges of $T_i$.
    Also replace $\{B_e\}_{e\in T_i}$ by $\{B'_e\}_{e\in T_i'\cup T''_i}$ where $B'_e$ again contains all black vertices in $T_j$ with $j\neq i$ and the center vertex of the odd distance vertices in $T'_i\cup T''_i$ and the center vertex of the component of $T'_i\cup T''_i$ not containing $e$, and the vertices in $B_{\phi(e)}$ outside $\A$.     
    Then, the $n$-sets corresponding to edges in $T'_i\cup T''_i$ now cover 
       \begin{equation*}
         t_i \Big(n-\sum_{j\neq i} b_j - 1\Big)- t'_i(t'_i-1) - t''_i(t''_i-1).
    \end{equation*}  
    vertices outside of $\A.$ 
As $|t'_i-t''_i|\leq 1$, this is at least \eqref{eq:A} plus $$\floorfrac{t_i^2}{2} - t'_i(t'_i-1) - t''_i(t''_i-1) -t_i \geq - \frac{(t'_i-t''_i)^2 +1}{2} \geq -1.$$ On the other hand, the number of vertices in $\A$ has increased by one, this shows that the replacement does not decrease the total order of the $(2,n)$-bounded $1$-cross-intersecting SPS $(\A, \B)$. So we can assume all of them are stars.
    \end{claimproof}

    Let $\A$ be the union of $k$ stars with sizes $t_1, t_2, \ldots, t_k.$
    Then each set $B_e$ with $e\in T_i$ contains the center of all stars $T_j$ with $j\neq i$ not containing $e$, and the $t_i-1$ vertices from odd distance in $T_i$. 
    The $(2,n)$-bounded $1$-cross-intersecting SPS $(\A, \B)$ covers at most
    $$\sum_{i=1}^k (t_i+1)+ \sum_{i=1}^k t_i(n-(k-1)-(t_i-1))=\sum_{i=1}^k (t_i(n-k-t_i+3)+1)$$ vertices.
    For fixed $k$ and total size $t$, this sum is maximized when all $t_i$ differ by at most $1.$
    Let $t=\frac{\sum_{i=1}^k t_i}{k}.$
    Then we have an upper bound which equals $tk(n-k-t+3)+k.$
    This is maximized when $t=k=\frac{n+3}{3}$, giving the desired upper bound $\frac{(n+3)^3}{27}+\frac{n+3}{3}$.
    
    When $3 \mid n$, equality can be attained.
    Here $\A$ are the edges of $\frac{n+3}{3}$ stars each of size $\frac{n+3}{3}$, say $T_1, T_2, \ldots T_{\frac{n+3}{3}}.$
    For each edge $e \in E(T_i)$ in $\A$, one can take the corresponding $n$-set in $\B$ to be the union of $(T_i \setminus e)$, the center of the other stars $T_j$ and an additional distinct set of $\frac{n}{3}$ other vertices.
\end{proof}

%------------------------------------------------

\section{Concluding remarks}

In this paper, we establish several results on the order of intersecting hypergraphs. Our main %theorems
results, \Cref{thm:furedi-order,thm:deza-order}, are asymptotically tight; however, there remain error terms that our approach cannot fully eliminate.
Specifically, we believe that the additive term $\frac{3}{2}\binom{2k-2}{k-1}$ in \Cref{thm:furedi-order} can be replaced by a polynomial in $k$. This term originates from an upper bound on the minimum size of the kernel in intersecting 
$k$-graphs. On the other hand, for $\calH$ to have order close to $\frac{1}{4}k^2 \Delta$, it must resemble a projective plane of order roughly $k/2$. 
Since the kernel of a projective plane has size quadratic in its order, a more refined structural analysis of intersecting $k$-graphs may improve \Cref{thm:furedi-order}.

For non-sunflower $\lambda$-intersecting $k$-graphs, as discussed in \Cref{sec:lambda-intersecting}, \Cref{thm:deza-order} is asymptotically tight when $\lambda=o(k)$. We wonder what happens for larger $\lambda$.
For exact results, the most intriguing case is that of non-sunflower $1$-intersecting $k$-graphs. From \Cref{thm:1-intersecting} and \Cref{prop:order-tight}, we know that $\max_{\calH} \ord (\calH) = \left(\frac{4}{27} + o_k(1) \right)k^3$, where the maximum runs over all non-sunflower $1$-intersecting $k$-graphs $\calH$. It would be nice to determine the exact value of this maximum.

There is a similar notion of sunflower for intersecting hypergraphs, namely trivial intersecting hypergraphs, in which all edges share a common vertex.
Note that being a non-trivial intersecting $k$-graph is a stricter condition than being a non-sunflower.
The following conjecture is attributed to Hall in \cite{Chowdhury}, though we could not locate it in \cite{Hall}.

\begin{conjecture}[Hall~\cite{Hall}]\label{conj:Hall-conj}
    Let $\calH$ be a non-trivial $\lambda$-intersecting $k$-graph,
    where $k$ is sufficiently large as a function of $\lambda$. Then $|E(\calH)| \leq \frac{k(k-1)}{\lambda} + 1.$
\end{conjecture}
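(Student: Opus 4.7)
The plan is to argue by contradiction using the McCarty-Vanstone inequality (\Cref{lem:degree-condition}). Suppose $m := |E(\calH)| > \frac{k(k-1)}{\lambda} + 1$. Evaluating the quadratic $q(d) = \lambda d^{2} - T d + \mu T$ with $T = \mu + \lambda m$ at $d = k$ gives
\[
q(k) = \lambda^{2}\Bigl(\frac{k(k-1)}{\lambda} + 1 - m\Bigr) < 0,
\]
so the two real roots $d_{1} < d_{2}$ of $q$ bracket $k$. Hence each vertex $v$ is either \emph{low} with $d(v) \le d_{1} < k$, or \emph{heavy} with $d(v) \ge d_{2} > k$. Non-triviality forces $d(v) \le m - 1$ for every $v$; write $H$ for the set of heavy vertices and $t = |H|$.

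We would first dispose of peripheral regimes. If $m \le 20\mu$, then $m \le 20(k-\lambda) < \frac{k(k-1)}{\lambda}+1$ once $k \ge 20\lambda + 1$. If $t \ge \lambda$, then since non-trivial implies non-sunflower when $\lambda \ge 1$, \Cref{lem:heavy-edge} gives $m \le 3\lambda\mu$, again below the target once $k \ge 3\lambda^{2}+1$. Hence we may assume $m > 20\mu$ and $t \le \lambda - 1$. The key identity $\sum_{v \in e} d(v) = k + \lambda(m-1)$, split according to the heavy/low dichotomy using $d(v) \le m-1$ on $H$ and $d(v) \le d_{1}$ on the complement, yields for every edge $e$:
\[
|e \cap H| \ge \frac{\lambda(m-1) - k(d_{1}-1)}{m-1-d_{1}}.
\]
Combining with $|e \cap H| \le t \le \lambda - 1$ and simplifying gives $d_{1} \ge \frac{m+k-1}{\mu+1}$. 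Together with $d_{1} < k$ this forces $m < k(k-\lambda) + 1$, which for $\lambda = 1$ directly contradicts $m > k(k-1) + 1$, completing the proof in that case.

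The principal obstacle is that for $\lambda \ge 2$ the derived bound $m < k(k-\lambda)+1$ is weaker than Hall's target $\frac{k(k-1)}{\lambda}+1$. To close this gap, we plan to sharpen the upper bound on heavy-vertex degrees via an injective-counting argument: for any heavy vertex $v$ and any edge $e_{0}$ not containing $v$ (which exists since $d(v) \le m-1$), the map $e \mapsto e \cap e_{0}$ from the sub-hypergraph $\calH_{v}$ of edges through $v$ to the $\lambda$-subsets of $e_{0}$ is injective — otherwise two edges through $v$ would share at least $\lambda + 1$ vertices — so $d(v) \le \binom{k}{\lambda}$. For $\lambda = 1$ this already gives $d_{2} \le k$, contradicting $d_{2} > k$, and provides an alternative proof in that case. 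For $\lambda \ge 2$ we would combine this injectivity across multiple choices of $e_{0}$ (exploiting that the edges outside $\calH_{v}$ themselves form a $\lambda$-intersecting family) with an induction on $\lambda$: deleting $v$ from the edges through $v$ yields a $(\lambda-1)$-intersecting $(k-1)$-graph to which the inductive hypothesis applies when non-trivial, while a trivial outcome allows peeling off a further common vertex and iterating until a non-trivial stage appears or a sunflower structure (forbidden by \Cref{thm:deza}) arises. The main technical difficulty is that such induction naively loses a multiplicative factor $\frac{\lambda}{\lambda-1}$ per step, so recovering the exact coefficient $\frac{1}{\lambda}$ in the leading term will require a refined quantitative account of how the edges outside $\calH_{v}$ interact with those inside.
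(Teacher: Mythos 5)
The statement you are trying to prove is not a theorem of the paper but an open conjecture: the paper records it (attributed to Hall) precisely because it has no proof, noting that only the cases $\lambda=2$ (Hall) and $\lambda=3$ (Chowdhury) are known, and that ``the general case is still widely open.'' So there is no proof in the paper to compare against, and your proposal does not supply one either. Your first two paragraphs are sound as far as they go: the computation $q(k)=\lambda^2\bigl(\frac{k(k-1)}{\lambda}+1-m\bigr)$ from \Cref{lem:degree-condition} is correct, the heavy/low dichotomy around $k$ follows, and the edge-degree identity $\sum_{v\in e}d(v)=k+\lambda(m-1)$ combined with $|e\cap H|\le\lambda-1$ does yield $d_1\ge\frac{m+k-1}{\mu+1}$ and hence $m<k(k-\lambda)+1$. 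This settles $\lambda=1$ (which is classical, essentially the de Bruijn--Erd\H{o}s/near-pencil bound) but, as you yourself observe, for $\lambda\ge2$ it only gives $m\lesssim k^2$ rather than $m\lesssim k^2/\lambda$ --- i.e.\ it recovers the trivial Fisher-type bound and misses the entire content of the conjecture.

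The third paragraph does not close this gap; it is a research plan with self-acknowledged failure points, and the specific tools named cannot work as described. The injectivity bound $d(v)\le\binom{k}{\lambda}$ is far weaker than $d(v)\le m-1$ for $\lambda\ge2$ and sharpens nothing. The proposed induction on $\lambda$ (deleting a heavy vertex $v$ to get a $(\lambda-1)$-intersecting $(k-1)$-graph) at best bounds $d(v)$, not $m$, and you concede it loses a multiplicative factor $\frac{\lambda}{\lambda-1}$ per step --- compounding these losses returns you exactly to the coefficient $1$ instead of $\frac{1}{\lambda}$, so the ``refined quantitative account'' you defer to is not a technicality but the whole problem. In short: the argument is a correct partial reduction plus an incomplete sketch of an attack on a conjecture that remains open; it should not be presented as a proof of the stated bound for any $\lambda\ge2$.
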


For $\lambda = 2$, Hall~\cite{Hall} proved that \Cref{conj:Hall-conj} holds for every $k\ge3$, and Chowdhury~\cite{Chowdhury} confirmed its validity for $\lambda = 3$. However, the general case is still widely open. 
If \Cref{conj:Hall-conj} holds, an immediate corollary is that the order of a non-trivial $\lambda$-intersecting $k$-graph can be bounded by $k + \frac{k(k-1)(k-\lambda)}{\lambda}$ using \Cref{lem:edge-vertex}, though this bound is unlikely to be sharp.

If a biplane of order $q$ does exist, then for $k=\frac 32 (q+2)$, there exists a non-trivial $2$-intersecting hypergraph of order $(1+o(1))\frac{2k^3}{27}$.
This construction involves extending every edge with $k-(q + 2)$ unique vertices. However, the existence of infinitely many biplanes is still open.
Cameron~\cite{Cameron99}
notes that deciding whether or not infinitely many biplanes exist, seems far out of reach. In fact, the existence of the biplane with parameters $(121,16,2)$ remains unresolved.
More generally, if a symmetric $2$-$(v, q + \lambda, \lambda)$-design exists, setting $k = \frac{3}{2}(q + \lambda)$ yields a non-trivial $\lambda$-intersecting hypergraph of order $(1 + o(1)) \frac{4k^3}{27 \lambda}$. One may wonder if, as an analogue of Hall's conjecture, this bound is also sharp for non-trivial $\lambda$-intersecting $k$-graphs.

\begin{question}
    For fixed $\lambda$ and sufficiently large $k$, is it true that a non-trivial $\lambda$-intersecting 
    $k$-graph has order bounded by $(1+o_k(1))\frac{4k^3}{27\lambda}$?
\end{question}

\printbibliography

\end{document}